\documentclass[11pt,twoside,reqno]{amsart}

\usepackage{orcidlink}
\usepackage{mathtools}
\usepackage{todonotes}
\usepackage{microtype}
\usepackage[noadjust]{cite}
\usepackage[OT1]{fontenc}
\usepackage{type1cm}
\usepackage{amssymb}
\usepackage{dsfont}
\usepackage{enumitem}
\usepackage{comment}
\usepackage{xcolor}
\usepackage{subcaption}
\usepackage{lmodern}

\usepackage{geometry}
\usepackage{hyperref}
\hypersetup{
  colorlinks=true,
  linkcolor=black,
  anchorcolor=black,
  citecolor=black,
  filecolor=black,      
  menucolor=red,
  runcolor=black,
  urlcolor=black,
}
\usepackage[capitalize,noabbrev,nameinlink]{cleveref}

\makeatletter
\AddToHook{cmd/appendix/before}{\def\cref@section@alias{appendix}}
\makeatother

\theoremstyle{plain}
\newtheorem{theorem}{Theorem}[section]

\newtheorem{proposition}[theorem]{Proposition}

\newtheorem{lemma}[theorem]{Lemma}

\newtheorem{ques}{Question}

\theoremstyle{remark}
\newtheorem{remark}[theorem]{Remark}

\theoremstyle{definition}

\newtheorem*{definition*}{Definition}

\newtheorem*{question*}{Question}

\AddToHook{env/proposition/begin}{\crefalias{theorem}{proposition}}
\AddToHook{env/lemma/begin}{\crefalias{theorem}{lemma}}
\AddToHook{env/corollary/begin}{\crefalias{theorem}{corollary}}
\AddToHook{env/claim/begin}{\crefalias{theorem}{claim}}
\AddToHook{env/conjecture/begin}{\crefalias{theorem}{conjecture}}
\AddToHook{env/remark/begin}{\crefalias{theorem}{remakr}}
\AddToHook{env/example/begin}{\crefalias{theorem}{example}}
\AddToHook{env/remark/begin}{\crefalias{theorem}{remark}}

\crefformat{equation}{(#2#1#3)}
\crefformat{figure}{#2Figure #1#3}
\crefformat{enumi}{(#2#1#3)}
\crefformat{theorem}{#2Theorem~#1#3}
\crefformat{lemma}{#2Lemma~#1#3}
\crefformat{proposition}{#2Proposition~#1#3}
\crefformat{corollary}{#2Corollary~#1#3}
\crefformat{claim}{#2Claim~#1#3}
\crefformat{section}{#2Section~#1#3}
\crefformat{subsection}{#2Section~#1#3}
\crefformat{remark}{#2Remark~#1#3}

\numberwithin{equation}{section}

\newcommand{\R}{\mathbb{R}}

\newcommand{\Z}{\mathbb{Z}}
\newcommand{\N}{\mathbb{N}}

\newcommand{\eps}{\varepsilon}

\newcommand{\mtt}[1]{\mathtt{#1}}
\newcommand{\mb}[1]{\mathbf{#1}}
\newcommand{\omtt}[1]{\overline{\mathtt{#1}}}
\newcommand{\Bad}{\mathbf{Bad}}

\renewcommand{\geq}{\geqslant}
\renewcommand{\leq}{\leqslant}

\DeclareMathOperator{\udimloc}{\overline{dim}_{loc}}
\DeclareMathOperator{\ldimloc}{\underline{dim}_{loc}}

\DeclareMathOperator{\dimh}{dim_H}

\DeclareMathOperator{\diml}{dim_L}
\DeclareMathOperator{\dimml}{dim_{ML}}

\DeclareMathOperator{\Tan}{Tan}

\DeclareMathOperator{\diam}{diam}

\usepackage{tabularx}
\usepackage{booktabs}

\begin{document}

\title[Badly approximable points on non-linear carpets]{Badly approximable points \\ on non-linear carpets}

\author{Roope Anttila}
\address[Roope Anttila]
        {University of St Andrews \\ 
         Mathematical Institute\\ 
         St Andrews KY16 9SS\\ 
         Scotland}
\email{ra216@st-andrews.ac.uk}

\author{Jonathan M. Fraser}
\address[Jonathan M. Fraser]
        {University of St Andrews \\ 
         Mathematical Institute\\ 
         St Andrews KY16 9SS\\ 
         Scotland}
\email{jmf32@st-andrews.ac.uk}

\author{Henna Koivusalo}
\address[Henna Koivusalo]
        {School of Mathematics \\ 
         Fry Building \\ 
         Woodland Road \\ 
         Bristol BS8 1UG \\
         United Kingdom}
\email{henna.koivusalo@bristol.ac.uk}

\subjclass[2020]{} 
\keywords{}
\date{\today}
\thanks{RA and JMF were supported by the EPSRC, grant no. EP/Z533440/1.  JMF was also supported by a Leverhulme Trust Research Project Grant (RPG-2023-281). We thank Lawrence Lee for drawing our attention to Question 5.2 from \cite{Das2019}.}

\begin{abstract}
 The badly approximable points in $\R^d$ are those for which Dirichlet's approximation theorem cannot be improved by more than a constant, that is, they are the points most difficult to approximate by rational vectors. An important problem in Diophantine approximation is to determine when the set of badly approximable points intersects a given set in full dimension. We find the first class of non-linear non-conformal attractors for which this full intersection property holds, thus answering a question of Das--Fishman--Simmons--Urba\'nski from 2019.  We also provide a formula for the Hausdorff dimension of these attractors which is of independent interest.
 \\ \\
\emph{Mathematics Subject Classification 2020}: primary: 28A78, 28A80, 11K55; secondary: 37C45.\\
\emph{Key words and phrases}: badly approximable points, Hausdorff dimension, non-linear non-conformal attractor, parabolic Cantor set.
\end{abstract}

\maketitle

\tableofcontents

\section{Introduction}\label{sec:intro}
Diophantine approximation is the quantitative study of how well irrational points can be approximated by rational points. This fundamental problem has many variants which impinge on many different areas of mathematics, including number theory, ergodic theory, and fractal geometry. The most classical result in the field is Dirichlet's approximation theorem, which gives a sharp approximation rate for all points in $\R^d$. The points for which Dirichlet's theorem cannot be improved by more than a constant are called \emph{badly approximable}: they are the  points $\mb{x}\in\R^d$, for which there is a constant $c>0$ such that for all $\mb{p}\in\Z^d$ and $q \in \mathbb{N}$,
\begin{equation*}
    \left\|\mb{x}-\frac{\mb{p}}{q}\right\|\geq \frac{c}{q^{1+1/d}}.
\end{equation*}

An important question in Diophantine approximation is to understand the size and distribution of badly approximable points. A classical theorem of Khinchine shows that the set of badly approximable points in $\R^d$, which we denote by $\Bad_d$, has Lebesgue measure zero, but it was shown in \cite{Schmidt1969}, using a game theoretic approach now known as   Schmidt's game, that $\Bad_d$ is of full Hausdorff dimension $d$. Moreover, the badly approximable points should be very uniformly spread out, and in particular it is expected that badly approximable points on a fractal set $X$, which is not designed to specifically avoid them, should have full Hausdorff dimension, that is, one expects
  \[
\dimh X \cap \Bad_d = \dimh X.
  \]
This has been verified for many important classes of fractal sets $X$, such as for certain Ahlfors regular sets \cite{Kleinbock2005,Fishman2009} (building on ideas from \cite{Kleinbock2005a}) and for some self-affine carpets satisfying necessary non-concentration conditions \cite{Das2019}. A useful approach is to use a variant of Schmidt's game and the lower dimension. There are many equivalent ways to define the lower dimension, but the one that is convenient for us is the version defined using weak tangents. Recall that a compact set $T$ is called a \emph{weak tangent} of a compact set $X$ (denoted by $T\in\Tan(X)$), if there are $x_n\in X$, and $0<r_n\leq\diam(X)$, such that
  \begin{equation*}
        \frac{X\cap B(x_n,r_n)-x_n}{r_n}\to T,
  \end{equation*}
in the Hausdorff distance. We define the \emph{lower dimension} of a non-empty compact set $X \subseteq \R^d$ by
\begin{equation*}
    \diml X = \min\{\dimh T\colon T\in\Tan(X)\}.
\end{equation*}
The minimum was shown to exist in \cite[Theorem 1.1]{Fraser2019a}, where the equivalence of this definition with the more common definition for lower dimension was proved as well. We refer the reader to \cite{Fraser2019a,Fraser21} for more background on the lower dimension. Note that $\diml X \leq \dimh X$, since, by our definition, (a translated copy of) $X$ is a weak tangent of itself. One can further require that $r_n\searrow 0$ for the minimising weak tangent, but we do not need to make this assumption.  The lower dimension identifies the thinnest parts of the set $X$ and is dual to the Assouad dimension, which identifies the thickest parts of the set. 

 The Schmidt game approach yields the following estimate, which we will use in this paper. This result was proved in \cite[Theorem 3.1]{Fishman2009}, see also \cite[Corollary 2.6]{Das2019} and \cite[Section 14.2]{Fraser21}.
  \begin{proposition} \label{schmidt}
      Suppose $X \subseteq \R^d$ is closed and \emph{hyperplane diffuse} in the sense that there exists $\beta>0$ such that for all $R \in (0,1), x \in X$ and affine hyperplanes $V \subseteq \R^d$,
      \[
B(x,R) \cap X \setminus V_{\beta R} \neq \emptyset
      \]
      where $V_\eps$ denotes the open $\eps$-neighbourhood of $V$.  Then
      \[
     \dimh X \cap \Bad_d \geq  \diml  X.
      \]
  \end{proposition}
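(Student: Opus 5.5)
The plan is to follow the Schmidt-game strategy of \cite{Fishman2009}: play a hyperplane absolute game on $X$ and convert a win into a lower bound for $\dimh X\cap\Bad_d$. The argument has three steps.

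The first step is to show that $\Bad_d$ is winning for the hyperplane absolute game. This is the variant in which Alice, at each turn, deletes a $\beta$-neighbourhood of an affine hyperplane from Bob's current ball. The standard simplex-lemma argument of Davenport and Schmidt does this. Fix the shrinking parameter and choose $c$ small. Any two rationals $\mb p/q,\mb p'/q'$ with $q,q'\le Q$ and $\|\mb p/q-\mb x\|<c/q^{1+1/d}$ that lie in a ball of radius $\asymp Q^{-(1+1/d)}$ must, together with $d-1$ further such points, span a simplex of volume smaller than $1/(d!\,Q^{d+1})$. That is impossible for rational vertices unless all of them lie on a common affine hyperplane. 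So, at stage $k$, all dangerous rationals with denominators in the relevant dyadic range lie on one hyperplane $V_k$. Alice deletes a neighbourhood of $V_k$ that is large enough to contain every $c/q^{1+1/d}$-ball around these rationals. Since the construction is intrinsic to $\R^d$, the limit point of the game lies in $\Bad_d$.

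The second step restricts the game to $X$. Hyperplane diffuseness is exactly what guarantees that Bob can always respond inside $X$. Alice's deleted region is $V_{\beta R}$, and the hypothesis gives a point $y\in B(x,R)\cap X\setminus V_{\beta R}$. Bob can then centre a ball of radius comparable to $\beta R$ at $y$ that avoids the deleted region, provided the constants are scaled suitably. Hence the game can be played with all centres in $X$, and because $X$ is closed, the outcome lies in $X\cap\Bad_d$.

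The third step, which I expect to be the main obstacle, converts the winning strategy into a dimension bound. Here Bob uses a branching strategy. At each stage he needs many well-separated admissible balls centred in $X$, and the number of such balls must be controlled by the lower dimension. Fix $s<\diml X$. Using the equivalence with the usual definition from \cite{Fraser2019a}, for $0<r<R\le1$ and $x\in X$, the set $B(x,R)\cap X$ contains at least $C(R/r)^s$ points that are $r$-separated. Alice removes only a $\beta$-neighbourhood of a single hyperplane. So choose the ratio $R/r=\rho^{-1}$ large enough that removing the points lying within $2r$ of that neighbourhood still leaves $\gtrsim\rho^{-s}$ of them. This is where some care is needed: a hyperplane neighbourhood of width $\beta R$ could contain a large fraction of the points, so a naive count fails. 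To handle it, iterate diffuseness at scale $\beta R$ and count the points of $X$ that survive in the complement at the finer scale $r$, losing only a constant factor. The surviving children form a tree of balls with branching $\gtrsim\rho^{-s}$ and contraction $\rho$. A mass distribution (Cantor set) argument then gives $\dimh\ge s-O(1/\log\rho^{-1})$. Letting $\rho\to0$ and $s\to\diml X$ yields the claim.
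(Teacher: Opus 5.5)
Your outline is correct and is essentially the Schmidt-game argument behind the results the paper cites for this proposition; the paper itself gives no proof. The steps match: $\Bad_d$ is hyperplane absolute winning via the simplex lemma, diffuseness makes the game playable with centres in $X$, and a branching strategy for Bob plus the lower-dimension count and the mass distribution principle give $\dimh X\cap\Bad_d\geq s-O(1/\log\rho^{-1})$ for every $s<\diml X$. Your fix for the counting step is also the right one: a single application of diffuseness gives $y\in X$ at distance $\gtrsim R$ from Alice's hyperplane, and the lower-dimension bound in a ball of radius comparable to $R$ about $y$ loses only a factor depending on the diffuseness constant. This requires the game parameter to be taken $\leq\rho$, which is allowed since Alice wins for every small parameter.
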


  The hyperplane diffusivity assumption essentially means that $X$ cannot be too concentrated on hyperplanes.  This condition is  necessary, as the following example shows.  Let 
  \[
  X=\{0\} \times [0,1]^{d-1} \subseteq \R^{d}.
  \]
  Then, Dirichlet's theorem (in $\R^{d-1}$) gives that for all $\mb{x}\in X$ for infinitely many  $\mb{p}\in\Z^d$ and $q \in \mathbb{N}$, 
\[
    \left\|\mb{x}-\frac{\mb{p}}{q}\right\|\leq \frac{c}{q^{1+1/(d-1)}}= o\left(\frac{1}{q^{1+1/d}}\right)
\]
and therefore $X \cap \Bad_d = \emptyset$ but $\diml X = d-1$.
\begin{remark}
The hyperplane diffusivity assumption can be removed when $d=1$ since a set $X \subseteq \R$ which fails to be hyperplane diffuse, necessarily satisfies $\diml X = 0$.  This can be proved straight from the definitions. More generally, if $X \subseteq \R^d$ fails to be hyperplane diffuse, then 
\begin{equation*}
    \diml X \leq  d-1.
\end{equation*}
Indeed, if $X$ fails to be hyperplane diffuse, then for all $\beta=1/n>0$ with $n \in \mathbb{N}$ there exists $R_n \in (0,1)$, $x_n \in X$, and an affine hyperplane $V^n$ such that
\[
B(x_n,R_n) \cap X \subseteq V^n_{ R_n/n}.
\]
But then
\[
\frac{B(x_n,R_n) \cap X -x_n}{R_n} \subseteq \frac{V^n_{R_n/n} -x_n}{R_n}.
\]
Noting that the right hand side is contained in $W^n_{1/n}$ for some affine hyperplane $W^n$ parallel to $V^n$, with distance at most $\frac{1}{n}$ from the origin, by taking a convergent subsequence of the sequence of sets on the left, we obtain $T \in \textup{Tan}(X)$ for which $T \subseteq W$ for some linear hyperplane $W$. Then $\diml X \leq \dimh T \leq \dimh W = d-1$.
\end{remark}

  With \cref{schmidt} in mind, one strategy to estimate $\dimh X \cap \Bad_d$ is to estimate $\diml X$.  Unfortunately, in many cases of interest, $\diml X < \dimh X$ and so a more nuanced strategy is to search for closed and hyperplane diffuse subsets of $X$ with large lower dimension.  This motivates the \emph{modified lower dimension}, defined by
  \[
\dimml X = \sup\{ \diml Y : Y \subseteq X\}.
  \]
  Then, for all closed sets, 
\[
\diml X \leq \dimml X \leq \dimh X.
\]
Bedford--McMullen carpets (see \cite{bmsurvey})   are an important class of self-affine set and are in some sense the most basic case of the family of sets we consider in this work.  They are defined via an iterated functions system (IFS)---which we define in \cref{sec:carpets}---but can be thought of as being defined by selecting a subset of rectangles from  a uniform but anisotropic grid imposed on the unit square and then iterating the chosen pattern \emph{ad infinitum}.  One may therefore talk about maps (in the IFS) corresponding to chosen rectangles in the grid.  This is a useful picture to keep in mind later. Bedford--McMullen carpets are non-conformal fractals  in the sense that they are invariant under a non-conformal IFS (or a non-conformal expanding map) and this comes from the anisotropy in the defining grid: upon iteration, the rectangles become increasingly eccentric. On the other hand, they are linear fractals since the defining IFS is affine, or equivalently, the associated expanding map is piecewise linear. It is known (see \cite{Das2019, Fraser21}) that Bedford--McMullen carpets $X$ satisfy
\[
\dimml X = \dimh X
\]
and, moreover, provided there are at least two columns used and at least one column with two maps, we have
 \[
     \dimh X \cap \Bad_2 =  \dimh  X
      \]
      see \cite{Das2019}.  As such, the desired number theoretic conclusion is reached for this class of non-conformal fractals.   The latter assumption is related to the hyperplane diffusivity constraint. In the proof of this result, the linearity of the underlying dynamics was used in a fundamental way and this motivated Das--Fishman--Simmons--Urba\'nski \cite{Das2019} to pose the following problem:
\begin{ques}[\cite{Das2019} Question 5.2]\label{ques:dasetal}
    Can Schmidt’s game be used to show that $\Bad_d$ has full dimension in some fractal defined by a dynamical system which is both non-conformal and non-linear?
\end{ques}

We answer this question in the affirmative in this paper, see \cref{thm:bad}.  Our main results pertain to a family of non-linear non-conformal carpets, and we obtain their Hausdorff dimensions as well as the Hausdorff dimension of their intersection with $\Bad_2$, see \cref{thm:variational-principle}, \cref{thm:hausdorff-equals-ml}, and \cref{thm:bad}. We also briefly discuss another class of dynamically defined fractals, parabolic Cantor sets, for which the Schmidt game--lower dimension approach bears fruit, see \cref{parabolic}.

\section{Non-linear carpets}\label{sec:carpets}
\begin{figure*}
    \centering
    \begin{subfigure}{0.45\linewidth}
        \centering
        \includegraphics[width=0.9\linewidth]{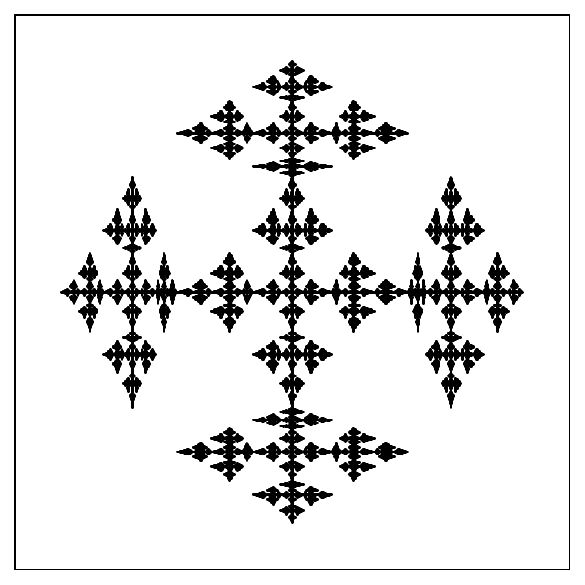}
    \end{subfigure}
    \begin{subfigure}{0.45\linewidth}
        \centering
        \includegraphics[width=0.9\linewidth]{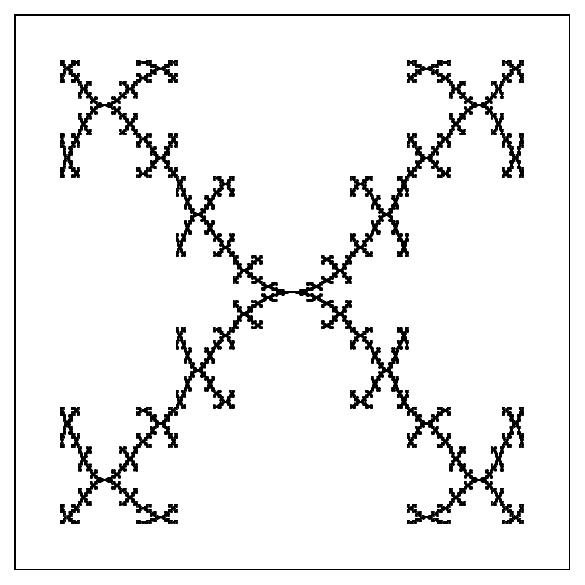}
    \end{subfigure}    
    \caption{Two non-linear carpets which fit in our framework}
    \label{fig:non-linear}
\end{figure*}
Before stating our results precisely, let us define our setup. Recall that a function $f\colon A\to A$, where $A\subset \R^d$, is a \emph{contraction} if $\|f(x)-f(y)\|<\|x-y\|$, for all $x,y\in A$, and a \emph{uniform contraction}, if there exists $0<\lambda<1$, such that $\|f(x)-f(y)\|\leq \lambda\|x-y\|$, for all $x,y\in A$. A collection of contractions $(f_i)_{i\in\Lambda}$, where $\Lambda$ is a finite index set and for all $i\in\Lambda$, $f_i\colon A\to A$, for some compact set $A\subset \R^d$, is called an \emph{iterated function system (IFS)}. The \emph{attractor} of an IFS is the unique non-empty compact set $X\subset A$, which satisfies
\begin{equation*}
    X=\bigcup_{i\in\Lambda}f_i(X).
\end{equation*}
One often imposes various separation conditions on IFSs to make their study easier. The one relevant to the current work is the \emph{open set condition (OSC)}, which the IFS is said to satisfy if there exists an open set $U\subset A$, such that $f_i(U)\subset U$ for all $i\in\Lambda$ and $f_i(U)\cap f_j(U)=\emptyset$, for all $i\ne j$.

In this paper we are mainly interested in a class of non-linear IFSs defined in the following way. Let $\Lambda_1$ and $\Lambda_2$ be finite index sets and let $(f_i)_{i\in\Lambda_1}$ and $(g_j)_{j\in\Lambda_2}$ each be a \emph{self-conformal IFS} on $[0,1]$, that is, each $f_i$ is a $C^{1+\alpha}$ uniform contraction on $[0,1]$ and each $g_j$ is a $C^{1+\beta}$ uniform contraction on $[0,1]$. We call these IFSs the \emph{coordinate IFSs}. Let $\Lambda\subset \Lambda_1\times \Lambda_2$ and consider the planar IFS $(S_{i,j}\coloneqq(f_i,g_j))_{(i,j)\in\Lambda}$. These IFSs can be thought of as non-linear analogues of diagonal self-affine IFSs. Due to this connection, and to emphasise the fact that we allow for non-linear maps in the IFS, we call the attractor of an IFS in the class above a \emph{non-linear carpet}. We assume going forward that a non-linear carpet $X$ satisfies $\diam(X)=1$; this can always be achieved with a rescaling, which does not affect our results. We emphasise that $\Lambda$ may be a strict subset of $\Lambda_1\times \Lambda_2$, so, in general, the inclusion $X\subset X_1\times X_2$, where $X_1$ and $X_2$ are the attractors of $(f_i)_{i\in\Lambda_1}$ and $(g_j)_{j\in\Lambda_2}$, respectively, is strict.

In the generality described above, the class of non-linear carpets is very large; for example, it includes all attractors of diagonal self-affine IFSs, without the need to impose any grid structure. For our methods to work, we need the IFS to have a grid-like structure. The version we find suitable is the \emph{coordinate OSC}, which the non-linear carpet is said to satisfy if both of the coordinate IFSs $(f_i)_{i\in\Lambda_1}$ and $(g_j)_{j\in\Lambda_2}$ satisfy the OSC. Note that this assumption is strictly stronger than assuming that the planar IFS $(S_{i,j})_{(i,j)\in\Lambda}$ satisfies the OSC.

\subsection{Main results}
To state our results, let us briefly recall the symbolic space underlying the IFS; for more precise definitions, see \cref{sec:symbolic_spaces}. We let $\Sigma=\Lambda^{\N}$ denote the natural symbolic space associated to the non-linear carpet $X$, and let $\pi\colon \Sigma\to X$ denote the \emph{natural projection} which is defined for $(\mtt{i},\mtt{j})\coloneqq (i_1i_2\cdots,j_1,j_2\cdots)\in\Sigma$ by
\begin{equation*}
    \pi(\mtt{i},\mtt{j})=\lim_{n\to\infty}S_{i_1,j_1}\circ S_{i_2,j_2}\circ\ldots\circ S_{i_n,j_n}(0).
\end{equation*}
Natural dynamics on $\Sigma$ are given by the left shift $\sigma\colon\Sigma\to\Sigma$, defined by
\begin{equation*}
    \sigma(\mtt{i},\mtt{j})=(i_2i_3\cdots,j_2j_3\cdots),
\end{equation*}
for all $(\mtt{i},\mtt{j})=(i_1i_2\cdots,j_1j_2\cdots)\in\Sigma$. We abuse terminology slightly by saying that a Borel probability measure $\mu$ on $X$ is ergodic if there is a Borel measure $\nu$ on $\Sigma$, which is ergodic with respect to $\sigma$, such that $\mu=\pi_*\nu$.

Our first result is a variational principle for the Hausdorff dimension of the non-linear carpets defined above, which is of independent interest.
\begin{theorem}\label{thm:variational-principle}
    If $X$ is a non-linear carpet which satisfies the coordinate OSC, then
    \begin{equation*}
        \dimh X =\sup\{\dimh \mu\colon \mu\text{ is ergodic}\}.
    \end{equation*}
\end{theorem}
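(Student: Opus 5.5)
The lower bound is immediate: if $\mu$ is ergodic and supported on $X$, then $\dimh \mu \leq \dimh X$ (with $\dimh\mu$ the lower Hausdorff dimension of the measure). The work is the upper bound $\dimh X \leq \sup\{\dimh\mu\}$. I would follow the McMullen/Kenyon--Peres strategy for self-affine carpets, adapted to non-linear coordinate maps via bounded distortion.

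\textbf{Step 1: approximate squares.} For a finite word $(\mtt{i},\mtt{j})|_n$, the $C^{1+\alpha}$ and $C^{1+\beta}$ regularity of the coordinate maps gives bounded distortion. Hence the cylinder $S_{(\mtt{i},\mtt{j})|_n}([0,1]^2)$ is, up to uniform constants, a rectangle of width $|f_{i_1\cdots i_n}'(x)|$ and height $|g_{j_1\cdots j_n}'(y)|$. The derivatives may be evaluated at any point, since the distortion is bounded.

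Given $r>0$ and $(\mtt{i},\mtt{j})$, let $n=n(r)$ be the first time the larger of the two sides drops below $r$, and $m\geq n$ the first time the smaller side does. If, say, the horizontal side is the larger at scale $r$, the approximate square $Q_r(\mtt{i},\mtt{j})$ is the set of points agreeing with $(\mtt{i},\mtt{j})$ in the first $n$ full symbols and in the first coordinate up to level $m$. The coordinate OSC gives two facts:
\begin{itemize}
\item[(a)] $\pi(Q_r)$ has diameter $\lesssim r$;
\item[(b)] each $r$-ball meets only boundedly many approximate squares of level $r$.
\end{itemize}
So Hausdorff dimension can be computed with approximate squares.

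One complication: unlike the Bedford--McMullen case, which coordinate contracts more strongly can vary pointwise. The stopping times $n,m$ and the choice of coordinate therefore depend on the sequence.

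\textbf{Step 2: a Bernoulli-type candidate measure.} I would build a family of measures to test against. The natural approach is via thermodynamic formalism. For a vector $\mathbf{p}$ of probabilities on $\Lambda$, consider the Bernoulli measure $\nu_{\mathbf{p}}$ together with its projection to the first (or second) coordinate symbolic space, $\Lambda_1^{\N}$ or $\Lambda_2^{\N}$. A Ledrappier--Young type formula then gives
\[
\dimh \pi_*\nu = \frac{h(\nu)-h(\nu_1)}{\chi_2(\nu)} + \frac{h(\nu_1)}{\chi_1(\nu)}
\]
in the case $\chi_1(\nu)<\chi_2(\nu)$, where $\chi_k$ are the Lyapunov exponents of the coordinate maps, $\nu_1$ is the projection to the first coordinate, and $h$ denotes entropy. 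The case $\chi_1(\nu)\geq\chi_2(\nu)$ is symmetric. This formula follows by computing $\nu(Q_r)$ along typical points with Shannon--McMillan--Breiman and Birkhoff's theorem, using the coordinate OSC to relate entropy to measure.

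\textbf{Step 3: covering argument for the upper bound (the main obstacle).} Let $s>\sup_\mu \dimh\mu$. The aim is to show $\mathcal{H}^s(X)=0$, or at least $\dimh X\leq s$, by covering $X$ with approximate squares.

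The plan is a type-class (empirical measure) counting argument. Fix a large block length $N$ and recode. For each $r$, group the approximate squares according to the empirical distribution of symbols in the relevant digit blocks: the first $n$ full symbols, and the digits of the dominant coordinate between $n$ and $m$. Each type class has cardinality about $\exp\bigl(n\,h(\cdot)+(m-n)\,h_1(\cdot)\bigr)$. The ratios $n/\log(1/r)$ and $m/\log(1/r)$ are determined by averages of $\log|f'|$ and $\log|g'|$ against the empirical measures.

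By the variational bound, each class contributes at most $r^{-s'}$ squares with $s'<s$, unless some ergodic measure beats $s$. There are polynomially many classes in $1/r$, so summing over $r=2^{-k}$ gives a convergent cover.

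The difficulty is twofold. First, the empirical measure on the initial $n$ symbols and on the tail $[n,m]$ may differ. This is handled by concatenating them into a single invariant measure: a periodic/Markov measure built from the two empirical blocks, ergodic after averaging over shifts. Its Ledrappier--Young dimension then dominates the counting exponent, which is a Kenyon--Peres style convexity argument. Second, the non-linearity enters only through the Lyapunov-exponent averages, which are continuous in the measure because $\log|f_i'|$ is Hölder.

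I expect this concatenation step, which shows that the counting exponent of a type class is bounded by $\dimh$ of some ergodic measure, to be the crux. I would first try the approach of Kenyon--Peres for sponges, replacing constant exponents by integrals of the potentials $\log|f_i'|\circ\pi_1$ and $\log|g_j'|\circ\pi_2$.
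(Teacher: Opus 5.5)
\textbf{The gap is in Step 3, and it is not a missing detail: the key claim is false.} You assert that every type class at scale $r$ has counting exponent at most the dimension of some ergodic measure. At a fixed scale the type classes exhaust all approximate squares of side $r$, and there are only polynomially many classes in $\log(1/r)$. So your claim would give $\overline{\dim}_{\mathrm{B}} X\leq \sup_\mu\dimh\mu$. Summing over $r=2^{-k}$ does not help, because you never use the freedom of a Hausdorff cover to choose different scales at different points; the argument is box-counting.

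The linear case already refutes this. Take $f_i(x)=(x+i)/2$, $g_j(y)=(y+j)/3$ and $\Lambda=\{(0,0),(0,1),(1,0)\}$, which satisfies the coordinate OSC. By McMullen, $\sup_\mu\dimh\mu=\dimh X=\log_2(1+2^{\log_3 2})\approx 1.350$. Now take the single type class whose head block is uniform over the three maps and whose tail block is uniform over the two columns. Its counting exponent is $1+\log_3(3/2)=\dim_{\mathrm{B}}X\approx 1.369$. So no concatenated periodic measure, or any other ergodic measure, can dominate it, and the ``concatenation step'' you flag as the crux cannot be carried out.

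\textbf{What is needed, and how the paper does it.} Your block recoding is the right first move, but the paper uses it to avoid any hand-made covering.

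With $a_{\mtt{i}}=\|f_{\mtt{i}}'\|$ and $b_{\mtt{j}}=\|g_{\mtt{j}}'\|$ for $(\mtt{i},\mtt{j})\in\Lambda^n$, submultiplicativity of the sup norm makes the concatenation map $\gamma_n\colon(\Sigma_n,d[\mb{a}_n,\mb{b}_n])\to(\Sigma,d)$ $1$-Lipschitz. Bounded distortion gives the reverse H\"older bound with exponent $1+C/n$. Since $\pi$ is Lipschitz, this yields $\dimh X\leq\dimh\Sigma\leq\dimh\Sigma_n=\max_{\mb{p}}g(\mb{p},\mb{a}_n,\mb{b}_n)$. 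The last equality is Bara\'nski's theorem for the linear, non-dominated symbolic carpet $\Sigma_n$. That theorem is where the genuinely Hausdorff (not box-counting) part of the upper bound lives, including the pointwise choice of which coordinate dominates.

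For the other direction, take the maximising Bernoulli measure on $\Sigma_n$, push it forward by $\gamma_n$, and average over $\sigma^k$ for $0\leq k<n$. The result is ergodic with dimension at least $\dimh\Sigma_n/(1+C/n)$. The coordinate OSC then transfers this to $\pi_*$: the approximate squares form a general filtration and the coordinate projections are non-atomic.

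If you want a self-contained argument instead, replace type-class counting with McMullen's pointwise device. Fix a candidate measure, show $\ldimloc(\mu,x)\leq s$ at \emph{every} point $x$ along a point-dependent sequence of scales (chosen so that head and tail statistics are compatible), and apply the reverse mass distribution principle.
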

One of the main reasons we restrict our interest to planar attractors is that in higher dimensions, this variational principle does not hold in general even in the self-affine setting \cite{Das2017}. Our result is a generalisation of \cite[Theorem 2]{Gatzouras1997}, where the result was proved for a class of repellers of non-conformal dynamical systems where, when translated to the language of IFSs, {\it domination}, a geometric condition which forces the strongly contracting directions of all of the maps to be uniformly aligned, is imposed on the IFS in addition to the coordinate OSC.

The main idea in the proof is to consider subsystems generated by deep iterates of the original system. Due to a bounded distortion condition, see \cref{lemma:bounded-distortion}, for the coordinate IFSs, the attractors of these subsystems are morally very close to being self-affine sets, and one can then use tools from the self-affine theory to construct sequences of Bernoulli measures on the attractors with dimensions approaching the dimension of the non-linear carpet. In the setting of \cite{Gatzouras1997}, the domination condition ensures that the subsystems are close to being attractors of what are called \emph{Gatzouras-Lalley-carpets} in modern language, whereas the absence of domination means that in our setting, the subsystems resemble the non-dominated \emph{Bara\'nski-carpets}. By utilising the developments in self-affine fractal geometry that have happened after \cite{Gatzouras1997}, namely the work in \cite{Baranski2007}, the proof of \cref{thm:variational-principle} works quite similarly to the proof of \cite[Theorem 2]{Gatzouras1997}.

Our approach also provides a method which could, in theory, be used to obtain numerical estimates for the Hausdorff dimension of non-linear carpets, although we do not pursue this in this paper. Since the statement of the result requires quite a bit of setup, we postpone it to \cref{thm:baranski-like-dimh}, but in essence, we show that the Hausdorff dimension can be obtained as a limit of a sequence of solutions to some optimisation problems.

By combining some ideas used in the proof of \cref{thm:variational-principle}, with a non-linear analogue of an  argument of Ferguson, Jordan and Shmerkin \cite{Ferguson2010}, we get our second result.
\begin{theorem}\label{thm:hausdorff-equals-ml}
    If $X$ is a non-linear carpet which satisfies the coordinate OSC, then
    \begin{equation*}
        \dimh X=\dimml X =\sup\{\diml X'\colon X'\subset X\}.
    \end{equation*}
\end{theorem}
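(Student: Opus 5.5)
Since $\diml X' \leq \dimh X' \leq \dimh X$ for every closed $X'\subset X$, the inequality $\dimml X\leq \dimh X$ is immediate, and it remains to show that for every $\eps>0$ there is a closed subset $X'\subset X$ with $\diml X'\geq \dimh X-\eps$. The plan is to build $X'$ as the attractor of a subsystem generated by a deep iterate of the original IFS, following the strategy used for \cref{thm:variational-principle}, and then to run a non-linear version of the Ferguson--Jordan--Shmerkin argument on it.

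\textbf{Step 1: approximation by an almost self-affine subsystem.} By \cref{thm:variational-principle}, or more precisely by the construction in its proof, fix $\eps>0$ and a large level $n$. Select a collection $\Gamma\subset\Lambda^n$ of words $(\mtt{i},\mtt{j})$ with the following properties: all $f_{\mtt{i}}$ have comparable derivative $\approx e^{-n\lambda_1}$, all $g_{\mtt{j}}$ have comparable derivative $\approx e^{-n\lambda_2}$, and, say, $\lambda_1<\lambda_2$. Choose $\Gamma$ of Bernoulli type, so that its structure mimics a Bara\'nski carpet. Concretely, the number of distinct first coordinates $\mtt{i}$ should be $\approx e^{n h_1}$ and the number of words in each such ``column'' should be $\approx e^{n(h-h_1)}$, with
\[
\frac{h_1}{\lambda_1}+\frac{h-h_1}{\lambda_2}\geq \dimh X-\eps .
\]
Such a choice is possible by typicality for an ergodic, in fact Bernoulli, measure of dimension close to $\dimh X$, combined with the Ledrappier--Young type dimension formula. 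The key uniformisation step is to pass to a subset in which every column contains the \emph{same} number of words, up to a factor $e^{o(n)}$; this is the Ferguson--Jordan--Shmerkin trick, implemented by pigeonholing on column sizes and losing only $o(n)$ in the exponent. The bounded distortion of \cref{lemma:bounded-distortion} makes all cylinders of the subsystem at level $k$ comparable to rectangles of side lengths $e^{-kn\lambda_1\pm kO(1)}\times e^{-kn\lambda_2\pm kO(1)}$. Here the distortion constant is uniform in $n$, which is the crucial point, so the relative error becomes negligible after dividing by $n$.

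\textbf{Step 2: lower dimension of the subsystem attractor.} Let $X'$ be the attractor of $(S_\gamma)_{\gamma\in\Gamma}$; it is closed and contained in $X$. To bound $\diml X'$ from below, I would use the covering characterisation: for $x\in X'$ and $0<r<R$, count the minimal number of $r$-balls needed to cover $B(x,R)\cap X'$. Approximate squares of size $R$ and $r$ are built from the subsystem's cylinders, exactly as in the Bedford--McMullen lower dimension computation. Uniform fibres and uniform column counts make this count at least $(R/r)^{h_1/\lambda_1+(h-h_1)/\lambda_2-O(\eps)}$. The coordinate OSC guarantees that distinct approximate squares are essentially disjoint, with bounded multiplicity, so the counting transfers to balls.

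The \textbf{main obstacle} is Step 2 at scales not aligned with the level-$n$ blocks, together with the non-dominated (Bara\'nski) situation. The direction that contracts more strongly may differ between maps, so I would first restrict $\Gamma$ to a single ordering $\lambda_1<\lambda_2$, losing nothing by the choice of the optimal measure. Intermediate scales between consecutive block levels cost only a bounded factor $e^{O(n)}$, which is negligible relative to $R/r$ once $R/r\geq e^{Cn}$; for smaller ratios the bound is trivial up to constants. Letting $\eps\to0$ yields $\dimml X\geq\dimh X$.
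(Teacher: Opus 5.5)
Your overall architecture matches the paper's: take a deep iterate, extract a Ferguson--Jordan--Shmerkin-type subsystem with uniform column sizes whose Bara\'nski-type dimension is close to $\dimh X$, then run the Bedford--McMullen lower-dimension count. The implementation differences are harmless. You use typical words plus pigeonholing at a single level, and the covering-number characterisation of $\diml$. The paper uses exact type classes $\Gamma_{n,k}$ over the alphabet $\Lambda^n$, which make $a_{n,k}$, $b_{n,k}$ and the column sizes exactly constant, and it bounds $\dimh$ of every weak tangent by uniform Frostman estimates.

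\textbf{The gap.} It lies in ``and, say, $\lambda_1<\lambda_2$'' and ``losing nothing by the choice of the optimal measure''. Step~2 needs \emph{uniform domination}: on every branch and at every level of the subsystem, the horizontal side of a cylinder must exceed the vertical one. Otherwise the approximate squares are not all of the column type you count with. In the paper this is $k_m\geq \ell_m$, guaranteed by \cref{eq:domination-gap}.

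By your own estimate the sides are $e^{-kn\lambda_1\pm kO(1)}$ and $e^{-kn\lambda_2\pm kO(1)}$. So the ordering is uniform only if $n|\lambda_2-\lambda_1|$ beats $\log(1/c)$ plus your typicality window. In other words, you need a measure of near-maximal dimension whose two Lyapunov exponents are \emph{strictly} separated.

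This is not a WLOG. Bara\'nski's maximiser can sit on the interface $\mathcal{P}_V\cap\mathcal{P}_H$, where the exponents coincide. Then the $kO(1)$ distortion term decides the orientation of cylinders, and it can change from branch to branch and from level to level. A ball $B(x,R)$ may then contain only thin cylinders oriented the wrong way. Their $r$-covering numbers are governed by a projection, not by $h_1/\lambda_1+(h-h_1)/\lambda_2$, so your count fails. This is exactly the instability of $\diml$ under bounded-distortion (H\"older) errors that the paper singles out.

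Restricting $\Gamma$ to words with one ordering does not repair this. You would need a per-letter margin $\|g_{\mtt{j}}'\|<c\|f_{\mtt{i}}'\|$, so that it compounds through \cref{lemma:bounded-distortion}. You would also need to show this restriction costs only $e^{o(n)}$ words while preserving uniform columns. Both are automatic under a strict exponent gap, but not otherwise.

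\textbf{The missing ingredient.} It is \cref{lemma:sup-gap} together with \cref{prop:distinct-lyapunov}. Non-conformality gives level-$n$ words with $\|f_{\mtt{i}}'\|\neq\|g_{\mtt{j}}'\|$. Shifting a small amount of mass of the optimal vector onto such words gives $\mb{p}\in\mathcal{P}^\circ(\Lambda^n)$ with $g(\mb{p},\mb{a}_n,\mb{b}_n)$ still close to $t_n$ but $\lambda_1(\mb{p},\mb{a}_n)\neq\lambda_2(\mb{p},\mb{b}_n)$. The paper then amplifies this gap linearly in $k$ through the type classes $\Gamma_{n,k}$, which yields $b_{n,k}<c\,a_{n,k}$. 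Once you start from such a measure, with a gap large enough to survive the distortion error after iteration, your typical-set and pigeonhole construction and your covering count go through as described.
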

While the proof is similar in spirit to the proof of \cref{thm:variational-principle}, the exponential distortion in the coordinate directions, which can be made arbitrarily small and therefore does not affect the Hausdorff dimension at the limit, can in principle affect the (modified) lower dimension. Essentially, the problem is that, unlike the Hausdorff dimension, the lower dimension is not stable under Hölder maps (or indeed even under Lipschitz maps). In particular, it is in principle possible that the subsystems look dominated in one coordinate direction symbolically, but that there are \emph{some} cylinders, where the bounded distortion causes the cylinder to look dominated in the other direction for long enough to affect the lower dimension. If the non-linear carpet was dominated, this could not happen because the exponential distortion caused by the domination is stronger than the exponential distortion caused by the bounded distortion, so to treat the general case, we use a lemma which shows that, if the non-linear carpet is non-conformal, the measures used to approximate the Hausdorff dimension can be taken to have distinct Lyapunov exponents, see \cref{prop:distinct-lyapunov}. This method, together with the argument in \cite{Ferguson2010}, can then be used to build dominated subsystems for the non-linear carpets with lower dimension arbitrarily close to the dimension of the carpet.

As a straightforward application of \cref{thm:hausdorff-equals-ml} we answer \cref{ques:dasetal}. We need an additional assumption ensuring that the set has large vertical and horizontal fibres, which is needed to satisfy the hyperplane diffusivity condition. The most natural way to express this condition is to divide the maps in the IFS into columns and rows, indexed by the sets $\Lambda_1$ and $\Lambda_2$, respectively. The \emph{column} indexed by $i\in\Lambda_1$ is the collection
\begin{equation*}
    \{S_{i,j}\colon j\in\Lambda_2,\, (i,j)\in\Lambda\},
\end{equation*}
and the \emph{row} indexed by $j\in\Lambda_2$ is defined analogously.

\begin{theorem}\label{thm:bad}
    If $X$ is a non-linear carpet which satisfies the coordinate OSC and has at least two maps in some column and some row, then
    \begin{equation*}
        \dimh(X\cap \Bad_2)=\dimh X.
    \end{equation*}
\end{theorem}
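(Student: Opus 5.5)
The upper bound $\dimh(X\cap\Bad_2)\leq\dimh X$ is trivial, so the task is the lower bound. We would combine \cref{schmidt} with \cref{thm:hausdorff-equals-ml}. Fix $\eps>0$. By \cref{thm:hausdorff-equals-ml} there is a closed subset $X'\subset X$ with $\diml X'>\dimh X-\eps$. If we can arrange $X'$ to be hyperplane diffuse, then \cref{schmidt} gives $\dimh(X\cap\Bad_2)\geq\dimh(X'\cap\Bad_2)\geq\diml X'>\dimh X-\eps$, and letting $\eps\to0$ finishes the proof.

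So the substance is to make the approximating subset diffuse. The subsets produced in the proof of \cref{thm:hausdorff-equals-ml} are, by construction, attractors of subsystems of a deep iterate: collections of compositions $S_{\mtt{i}|_n,\mtt{j}|_n}$, in the dominated arrangement coming from the Ferguson--Jordan--Shmerkin style argument. We would enlarge such a subsystem by adjoining finitely many extra words built from the two given maps in a common column, $S_{i,j_1}$ and $S_{i,j_2}$, and the two given maps in a common row, $S_{i_1,j}$ and $S_{i_2,j}$. The new attractor $X''$ still satisfies the coordinate OSC. Its lower dimension can only increase: adding a bounded number of words to a subsystem with $N$ words changes the relevant entropy/Lyapunov quantities by $O(1/N)$ and does not destroy domination. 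We therefore expect $\diml X''\geq\diml X'-\eps$. We would check this against the lower-dimension formula for dominated subsystems obtained in the proof of \cref{thm:hausdorff-equals-ml}, or more simply choose the deep iterate level so that the added words are a negligible proportion.

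Next we verify hyperplane diffuseness of $X''$. Any affine line $V$ is either far from vertical, far from horizontal, or both. Take $x\in X''$ and $R\in(0,1)$, and choose an approximate square around $x$ of side comparable to $R$. This is a union of cylinders, and by bounded distortion (\cref{lemma:bounded-distortion}) each cylinder is comparable to a rectangle. Inside it we use the column pair and the row pair. Two column-mates yield two points of $X''$ at vertical separation comparable to the cylinder height, and two row-mates yield two points at horizontal separation comparable to the width. Because the subsystem contains words of every type at each level, the approximate square contains four points forming a non-degenerate quadrilateral, with both side lengths $\gtrsim R$. No line can come within $\beta R$ of all four points if $\beta$ is small.

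The main obstacle is the step where the approximate square contains sub-rectangles of comparable width and height in both directions. For a dominated system, cylinders in the approximate square are very eccentric, so one direction is resolved at a finer level than the other. We would handle this by choosing the level of the row-mate pair in the long direction and the level of the column-mate pair in the short direction separately, exploiting that the approximate square is a product of a column structure and a row structure. Bounded distortion keeps all implied constants uniform in $x$ and $R$, which yields a uniform $\beta$.
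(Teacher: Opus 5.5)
Your reduction is the right one: find a closed, hyperplane diffuse $X'\subset X$ with $\diml X'>\dimh X-\eps$ and apply \cref{schmidt}. However, the enlargement step is a genuine gap, and it actually works against you.

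The lower dimension is not monotone under adding maps, and it is not governed by averaged entropy or Lyapunov quantities. It records the thinnest part of the set, which for carpets means the worst column. For example, four maps filling one column of a $3\times 4$ Bedford--McMullen grid give a segment, with lower dimension $1$. Adding a single map in a second column gives a carpet with lower dimension $\log 2/\log 3$.

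Your added words are of exactly this dangerous kind. The row-mates $S_{i_1,j}$ and $S_{i_2,j}$ lie in different columns. An added letter sitting alone in its column means that, at points coded by repeating it, there is no vertical mass. The term $\frac{\log\#\Gamma_{n,k}-\log\#\Tilde{\Gamma}_{n,k}}{-\log b_{n,k}}$ is then lost from the lower dimension. The approximate squares around such points can also be thin horizontal strips, so your four-point argument for diffuseness fails there too.

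There are further problems. Added words whose contraction ratios differ from $(a_{n,k},b_{n,k})$, or that violate $b_{n,k}<c\,a_{n,k}$, break the uniform structure behind $k_m\geq \ell_m$ and the Frostman estimate in the proof of \cref{thm:hausdorff-equals-ml}. Words of a different length need not even preserve the OSC. So the claim $\diml X''\geq \diml X'-\eps$ is unsupported, and it fails for natural choices of the added words.

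What you are missing is that no enlargement is needed: the set $X_{n,k}$ from the proof of \cref{thm:hausdorff-equals-ml} is already hyperplane diffuse. The vector $\mb{p}$ there lies in $\mathcal{P}^{\circ}(\Lambda^n)$ (it comes from \cref{prop:distinct-lyapunov}), so every symbol of $\Lambda^n$ occurs in every word of $\Gamma_{n,k}$.

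Swapping the positions of the two symbols coming from the given row-mates shows $\#\Tilde{\Gamma}_{n,k}\geq 2$. Swapping the two symbols coming from the given column-mates shows that some column of $\Gamma_{n,k}$ contains at least two maps. Since all columns have the same size $\#\Gamma_{n,k}/\#\Tilde{\Gamma}_{n,k}$, every column does.

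Hence the horizontal projection of $X_{n,k}$ has diameter $d_1>0$, and every non-empty vertical fibre has diameter at least $d_2>0$. Your final paragraph then goes through on $X_{n,k}$ itself. An approximate square of radius comparable to $R$ is a level-$k_m$ horizontal structure times a level-$\ell_m$ vertical structure, and it contains a point outside $V_{\beta R}$ with $\beta\asymp\min\{d_1,d_2\}$. This is the paper's argument.
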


The additional assumptions in \cref{thm:bad} are slightly subtle, and while they ensure that the non-linear carpet is hyperplane diffuse, our method in fact works in some situations where this is not the case. Indeed, to deduce \cref{thm:bad} straight from \cref{thm:hausdorff-equals-ml,schmidt}, we only need to ensure that the \emph{subsets} we use to witness the modified lower dimension in the proof of \cref{thm:hausdorff-equals-ml} are hyperplane diffuse. To be more precise, to prove the result, it is enough that for $\eps>0$ we can find a \emph{closed hyperplane diffuse} subset $X' \subset X$ such that $\diml X' > \dimh X - \eps$. This means that the result also holds in a more general setting where we only need two maps in either a row or a column depending on which coordinate direction dominates in the dimension formula but we chose to state the result as above for simplicity and leave the reader to consider more general statements. The proof of the hyperplane diffusivity is quite simple, but we choose to defer it to \cref{sec:bad} to more explicitly explain the exact situation where our methods work.

One might wonder if the condition can be weakened to only require that $X$ is not contained in any line. Certainly this condition is necessary. However, this assumption is not sufficient for our method of proof. For example, consider a Bedford--McMullen carpet with at most one map in each column but not all maps in the same row.  Clearly the attractor is not contained in a line, but it is also clearly not hyperplane diffuse and, moreover, any subset of the attractor will also fail to be hyperplane diffuse. This can be seen by taking $x$ to be any point on the attractor and letting $V$ be the horizontal line through $x$.  Then, no matter which $\beta>0$ we choose, $B(x,R) \cap X \subseteq V_{\beta R}$ for $R$ sufficiently small due to the increasing eccentricity of the defining maps upon iteration.

\begin{remark}
    Since our Theorem \ref{thm:bad} only applies in the case $d=2$, one might wonder about the status of Question \ref{ques:dasetal} for other values of $d$.  As we mentioned above there are significant challenges moving to higher dimensions in full generality due to the `dimension gap' phenomenon.  However, by replacing the conformal coordinate IFSs with conformal IFSs on higher dimensional spaces, we obtain a natural model where our proofs go through with very little difference. That is for any $d\geq 2$, we may take $(f_i)_{i\in\Lambda_1}$ and $(g_j)_{j\in\Lambda_2}$ to be conformal IFSs in $d_1$ and $d_2$, respectively, with $d_1+d_2=d$, and our methods work with simple modifications for the attractor of the non-conformal IFS $(S_{i,j}\coloneqq (f_i,g_j))_{(i,j)\in\Lambda}$ on $\R^d$. In some sense this model only has `one degree of non-conformality' whereas in full generality one might ask for $(d-1)$ degrees of non-conformality in dimension $d$.  In any case, this simple modification provides answers for the $d \geq2$ cases of Question \ref{ques:dasetal}. The $d=1$ case is of a different flavour, but one could view Proposition \ref{parabolic} as a solution, although we stress that the true spirit of Question \ref{ques:dasetal} requires $d \geq 2$.
\end{remark}

\section{Symbolic spaces}\label{sec:symbolic_spaces}
A substantial portion of the work will be done in symbolic spaces and in this section, we define the symbolic analogues of the non-linear carpets we study, and show how these spaces may be approximated by symbolic spaces which are analogous to self-affine Bara\'nski carpets. We start by setting up some notation.

For a finite index set (also called an \emph{alphabet}) $\Lambda$ we denote by $\Sigma(\Lambda)\coloneqq \Lambda^{\N}$ the associated \emph{symbolic space}. The reader should keep in mind that $\Lambda$ can be an arbitrary index set, for instance, the subsystems we build later in the paper correspond to symbolic spaces with alphabets given by collections of finite words from an initial alphabet. For any $n\in\N$, we call the set $\Lambda^n$ the collection of \emph{words of length $n$}, and denote by $\Lambda^*=\bigcup_{n=0}^{\infty}\Lambda^n$, the collection of all \emph{finite words}. We will denote the elements of both $\Sigma$ and $\Lambda^*$ by $\mtt{i}$, that is $\mtt{i}=i_1i_2\cdots$ or $\mtt{i}=i_1i_2\cdots i_n$ for some $n\in\N$, depending on context.  The length of a word $\mtt{i}\in\Lambda^*$ is denoted by $|\mtt{i}|$, and it is the unique natural number $n$, such that $\mtt{i}\in\Lambda^n$. For two words $\mtt{i}_1,\mtt{i}_2\in\Sigma(\Lambda)$, the \emph{longest common subword} of $\mtt{i}_1$ and $\mtt{i}_2$ is defined by $\mtt{i}_1\wedge \mtt{i}_2\coloneqq\mtt{i_1}|_{k}=\mtt{i_2}|_{k}$, where $k$ is the largest natural number which satisfies $\mtt{i_1}|_{k}=\mtt{i_2}|_{k}$.
Finally, for any collection of real numbers $(c_{i})_{i\in\Lambda}$ indexed by $\Lambda$, we let
\begin{equation*}
    c_{\mtt{i}}=c_{i_1}c_{i_2}\cdots c_{i_{|\mtt{i}|}},
\end{equation*}
for all $\mtt{i}\in\Lambda^*$, and similarly, if $(h_i)_{i\in\Lambda}$ is a collection of self-maps of some metric space, indexed by $\Lambda$, we let
\begin{equation*}
    h_{\mtt{i}}=h_{i_1}\circ h_{i_2}\circ\cdots\circ h_{i_{|\mtt{i}|}}.
\end{equation*}

Given a symbolic space $\Sigma=\Sigma(\Lambda)$, and $\mb{a}=(a_i)_{i\in\Lambda}\in(0,1)^{\Lambda}$, we may define a metric $\rho[\mb{a}]$ on $\Sigma$ by setting
\begin{equation*}
    \rho[\mb{a}](\mtt{i}_1,\mtt{i}_2)=a_{\mtt{i}_1\wedge \mtt{i}_2},
\end{equation*}
for all $\mtt{i}_1,\mtt{i}_2\in\Sigma$. It is easy to see that this defines a metric on $\Sigma$, and the topology induced by this metric coincides with the one generated by the \emph{cylinder sets}, which are defined for any $\mtt{i}\in\Lambda^*$ by
\begin{equation*}
    [\mtt{i}]=\{\mtt{i}'\in\Sigma\colon \mtt{i}'|_{|\mtt{i}|}=\mtt{i}\}.
\end{equation*}

Let us finally recall the definition of Bernoulli measures on symbolic spaces. For an alphabet $\Lambda$, we denote by
 \begin{equation*}
     \mathcal{P}(\Lambda)=\left\{\mb{p}=(p_{i,j})_{(i,j)\in\Lambda}\colon \sum_{(i,j)\in\Lambda}p_{i,j}=1\right\},
 \end{equation*}
 the collection of probability vectors indexed by $\Lambda$, and by $\mathcal{P}^{\circ}(\Lambda)=\{\mb{p}\in\mathcal{P}(\Lambda)\colon 0<p_i<1\,\forall i\in\Lambda\}$. For a given $\mb{p}\in\mathcal{P}(\Lambda)$, we define a pre-measure on the collection of cylinder sets by setting
 \begin{equation*}
     \nu_{\mb{p}}([\mtt{i}])=p_{\mtt{i}},
 \end{equation*}
 for all $\mtt{i}\in\Lambda^*$, and extend this uniquely to a Borel measure on $\Sigma$ using Carathéodory's extension theorem. The resulting measure, which we continue to denote by $\nu_{\mb{p}}$, is called the \emph{Bernoulli measure} associated with the probability vector $\mb{p}$.

\subsection{Symbolic Bara\'nski carpets}
Let us now describe the symbolic analogue of self-affine carpets. Let $\Lambda_1$ and $\Lambda_2$ be finite index sets and let $\Lambda\subset \Lambda_1\times \Lambda_2$. We identify the space $\Sigma=\Sigma(\Lambda)$ with a subset of $\Sigma(\Lambda_1)\times \Sigma(\Lambda_2)$ in the natural way. Given $\mb{a}\in(0,1)^{\Lambda_1}$ and $\mb{b}\in(0,1)^{\Lambda_2}$, we define a metric on $\Sigma$ by
\begin{equation*}
    d[\mb{a},\mb{b}]((\mtt{i}_1,\mtt{j}_1),(\mtt{i}_2,\mtt{j}_2))\coloneqq \max\{\rho[\mb{a}](\mtt{i}_1,\mtt{i}_2),\rho[\mb{b}](\mtt{j}_1,\mtt{j}_2)\}=\max\{\mb{a}_{\mtt{i}_1\wedge \mtt{i}_2},\mb{b}_{\mtt{j}_1\wedge \mtt{j}_2}\}
\end{equation*}
for all $(\mtt{i}_1,\mtt{j}_1),(\mtt{i}_2,\mtt{j}_2)\in\Sigma$. We emphasise that 
\begin{equation*}
    |(\mtt{i}_1,\mtt{j}_1)\wedge (\mtt{i}_2,\mtt{j_2})|=\min\{|\mtt{i}_1\wedge \mtt{i}_2|,|\mtt{j}_1\wedge \mtt{j}_2|\},
\end{equation*}
but in the definition of the metric, the longest common subwords are taken coordinatewise. Therefore, for any $(\mtt{i},\mtt{j})\in\Sigma$, and $r>0$, if we denote by $k$ and $\ell$ the unique integers, which satisfy
\begin{equation*}
    a_{\mtt{i}|_k}<r\leq a_{\mtt{i}|_{k-1}},\text{ and  }b_{\mtt{j}|_\ell}<r\leq b_{\mtt{i}|_{\ell-1}},
\end{equation*}
then the (open) ball with center $(\mtt{i},\mtt{j})\in\Sigma$ and radius $r$ in the metric $d[\mb{a},\mb{b}]$ is the set
\begin{equation*}
    Q(\mtt{i},\mtt{j},r)\coloneqq\{(\mtt{i}',\mtt{j}')\in\Sigma\colon \mtt{i}'|_k=\mtt{i}_k\text{ and }\mtt{j}'|_{\ell}=\mtt{j}|_{\ell}\}.
\end{equation*}
This is precisely the \emph{approximate square} centered at $(\mtt{i},\mtt{j})$, with radius $r$, in the language commonly used in the theory of self-affine sets, see e.g. \cite[Definition 4.3]{Baranski2007}. Due to this connection, we call these spaces \emph{symbolic Bara\'nski carpets}. These spaces play an important role later in the paper, since we may approximate non-linear carpets from the inside by projections of symbolic Bara\'nski carpets. Using the theory of self-affine sets, the Hausdorff dimensions of symbolic Bara\'nski carpets are relatively simple to describe and, crucially, every symbolic Bara\'nski carpet has a dimension maximising Bernoulli measure whose Hausdorff dimension has a relatively simple formula. Let us recall the construction.

For $\mb{p}\in\mathcal{P}\coloneqq\mathcal{P}(\Lambda)$, $i\in\Lambda_1$ and $j\in\Lambda_2$, we let
 \begin{equation*}
     q_i(\mb{p})=\sum_{j:(i,j)\in\Lambda}p_{i,j}\text{ \  and   \ }r_j(\mb{p})=\sum_{i:(i,j)\in\Lambda}p_{i,j}.
 \end{equation*}
 We denote the \emph{Lyapunov exponents} of $\mb{p}$ by
 \begin{equation*}
     \lambda_1(\mb{p},\mb{a})=\sum_{(i,j)\in\Lambda}p_{i,j}\log a_i,\text{ \  and  \ }\lambda_2(\mb{p},\mb{b})=\sum_{(i,j)\in\Lambda}p_{i,j}\log b_j.
 \end{equation*}
 Let us set
 \begin{align*}
     &g_1(\mb{p},\mb{a},\mb{b})=\frac{\sum_{(i,j)\in\Lambda}p_{i,j}\log q_i(\mb{p})}{\lambda_1(\mb{p},\mb{a})}+\frac{\sum_{(i,j)\in\Lambda}p_{i,j}\log p_{i,j}-\sum_{(i,j)\in\Lambda}p_{i,j}\log q_i(\mb{p})}{\lambda_2(\mb{p},\mb{b})}\\
     &g_2(\mb{p},\mb{a},\mb{b})=\frac{\sum_{(i,j)\in\Lambda}p_{i,j}\log r_j(\mb{p})}{\lambda_2(\mb{p},\mb{b})}+\frac{\sum_{(i,j)\in\Lambda}p_{i,j}\log p_{i,j}-\sum_{(i,j)\in\Lambda}p_{i,j}\log r_j(\mb{p})}{\lambda_1(\mb{p},\mb{a})}.
 \end{align*}
 We decompose $\mathcal{P}=\mathcal{P}(\Lambda)$ into sets
 \begin{align*}
     &\mathcal{P}_V=\left\{\mb{p}\in\mathcal{P}\colon \lambda_1(\mb{p},\mb{a})\geq \lambda_2(\mb{p},\mb{b})\right\}\\
     &\mathcal{P}_H=\left\{\mb{p}\in\mathcal{P}\colon \lambda_1(\mb{p},\mb{a})\leq \lambda_2(\mb{p},\mb{b})\right\},
 \end{align*}
 and define
 \begin{equation*}
    g(\mb{p},\mb{a},\mb{b})=\begin{cases}
    g_1(\mb{p},\mb{a},\mb{b}),&\text{ if }\mb{p}\in \mathcal{P}_V,\\
    g_2(\mb{p},\mb{a},\mb{b}),&\text{ if }\mb{p}\in \mathcal{P}_H\setminus \mathcal{P}_V.
     \end{cases}
 \end{equation*}
 It is not difficult to see that if $\mb{p}\in\mathcal{P}_V\cap \mathcal{P}_H$, then $g_1(\mb{p},\mb{a},\mb{b})=g_2(\mb{p},\mb{a},\mb{b})$, and that the function $\mb{p}\mapsto g(\mb{p},\mb{a},\mb{b})$ is continuous on $\mathcal{P}$ \cite{Baranski2007}. While the definition of $g(\mb{p},\mb{a},\mb{b})$ might seem complicated at first glance, the intuition behind it is simple: the first term in $g_1$ is nothing more than the dimension of the projection of the Bernoulli measure $\nu_{\mb{p}}$ onto the first coordinate, and the second term can be thought of as the ``average column dimension'' of the measure, and similarly, the first term in $g_2$ is the dimension of the projection of the Bernoulli measure $\nu_{\mb{p}}$ onto the second coordinate and the second term is the ``average row dimension''.

 We note that while the setting in \cite{Baranski2007} is Euclidean, the methods used are symbolic, in particular, the next proposition follows from \cite[Corollary 5.2]{Baranski2007} and the aforementioned fact that the approximate squares in Definition 4.3 of \cite{Baranski2007} correspond to symbolic balls in the metric $d[\mb{a},\mb{b}]$.
 \begin{proposition}\label{prop:baranski-measure-dim}
     If $(\Sigma,d[\mb{a},\mb{b}])$ is a symbolic Bara\'nski carpet, and $\mb{p}\in\mathcal{P}(\Lambda)$, then
     \begin{equation*}
         \dimh \nu_{\mb{p}}=g(\mb{p},\mb{a},\mb{b})
     \end{equation*}
 \end{proposition}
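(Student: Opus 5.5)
The plan is to compute the local dimension of $\nu_{\mb{p}}$ at $\nu_{\mb{p}}$-almost every point, using the explicit description of balls in $d[\mb{a},\mb{b}]$ as approximate squares $Q(\mtt{i},\mtt{j},r)$, and then to invoke the mass distribution principle (or its converse, Billingsley's lemma) so that an almost sure exact local dimension gives $\dimh\nu_{\mb{p}}$. Since the statement is the symbolic content of \cite[Corollary 5.2]{Baranski2007}, the argument below simply re-runs that proof in the present notation. Throughout, assume first that $\mb{p}\in\mathcal{P}^\circ(\Lambda)$; zero weights can be handled by passing to the sub-alphabet where $\mb{p}$ is positive, which changes neither side of the identity.

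Fix a $\nu_{\mb{p}}$-typical $(\mtt{i},\mtt{j})$. For $r>0$ let $k=k(r)$ and $\ell=\ell(r)$ be as in the description of $Q(\mtt{i},\mtt{j},r)$. By the strong law of large numbers applied to the i.i.d.\ sequence $(i_n,j_n)$ under $\nu_{\mb{p}}$, we have $\frac1k\log a_{\mtt{i}|_k}\to\lambda_1(\mb{p},\mb{a})$ and $\frac1\ell\log b_{\mtt{j}|_\ell}\to\lambda_2(\mb{p},\mb{b})$. Consequently $k\sim\log r/\lambda_1$ and $\ell\sim\log r/\lambda_2$. Suppose $\mb{p}\in\mathcal{P}_V$, so that $\lambda_1\ge\lambda_2$ (both are negative). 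Then $|\lambda_1|\le|\lambda_2|$, hence $k\gtrsim\ell$, and the approximate square consists of the cylinder of length $\ell$ in both coordinates, further restricted so that the first coordinate agrees up to $k$. Its measure therefore factorises:
\begin{equation*}
\nu_{\mb{p}}(Q)=p_{(\mtt{i},\mtt{j})|_\ell}\cdot\prod_{n=\ell+1}^{k}q_{i_n}(\mb{p}).
\end{equation*}
Taking logarithms, dividing by $\log r$, and applying the law of large numbers again to the two sums $\sum\log p_{i_n,j_n}$ and $\sum\log q_{i_n}$ gives
\begin{equation*}
\lim_{r\to0}\frac{\log\nu_{\mb{p}}(Q)}{\log r}=\frac{\sum p\log p}{\lambda_2}+\Bigl(\frac1{\lambda_1}-\frac1{\lambda_2}\Bigr)\sum p\log q.
\end{equation*}
This is exactly $g_1(\mb{p},\mb{a},\mb{b})$. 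The case $\mb{p}\in\mathcal{P}_H$ is symmetric, using the row marginals $r_j$, and yields $g_2$.

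With the local dimension equal to $g(\mb{p},\mb{a},\mb{b})$ almost everywhere, the standard density lemma in the metric space $(\Sigma,d[\mb{a},\mb{b}])$ gives $\dimh\nu_{\mb{p}}=g$. This step needs no doubling or Besicovitch property, since the lower bound follows from the mass distribution principle and the upper bound from a Vitali-type covering with balls, which works in any metric space for the upper estimate.

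The main obstacle is justifying the product formula for $\nu_{\mb{p}}(Q)$ together with the randomness of $k(r)$ and $\ell(r)$. The issue is that the indices are stopping times depending on the point, and the law of large numbers has to be applied along them. This is handled by noting that $k(r),\ell(r)\to\infty$ monotonically in $r$ and that the almost sure convergence of the ergodic averages transfers to any such sequence. A further point is that the limit must hold for all $r$, not just along a subsequence; this follows because consecutive ratios $a_{\mtt{i}|_{k-1}}/a_{\mtt{i}|_k}$ are bounded by $\max_i a_i^{-1}$, so interpolating between scales costs only a bounded factor.
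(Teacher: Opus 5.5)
Your proof is correct and takes essentially the paper's route: the paper cites \cite[Corollary 5.2]{Baranski2007} together with the observation that Barański's approximate squares are exactly the balls of $d[\mb{a},\mb{b}]$, and you have unpacked that citation into its underlying law-of-large-numbers computation of $\nu_{\mb{p}}(Q(\mtt{i},\mtt{j},r))$ followed by the density lemma. One small point to tidy is the boundary case $\lambda_1(\mb{p},\mb{a})=\lambda_2(\mb{p},\mb{b})$: there $k\geq\ell$ need not hold for all small $r$, but $|k-\ell|=o(\log(1/r))$, so whichever product formula applies gives the common value $g_1=g_2$.
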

The next proposition follows by combining Theorem A and Corollary 5.2 in \cite{Baranski2007}.
\begin{proposition}\label{prop:baranski-dim}
     If $(\Sigma,d[\mb{a},\mb{b}])$ is a symbolic Bara\'nski carpet, 
     \begin{equation*}
         \dimh\Sigma=\max_{\mb{p}\in\mathcal{P}(\Lambda)}g(\mb{p},\mb{a},\mb{b}).
     \end{equation*}
     Moreover, there exists $\mb{q}\in\mathcal{P}^{\circ}(\Lambda)$, such that
     \begin{equation*}
         \dimh\Sigma=\dimh\nu_{\mb{q}}=g(\mb{q},\mb{a},\mb{b}).
     \end{equation*}
 \end{proposition}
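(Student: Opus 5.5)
The plan is to prove the two inequalities $\dimh\Sigma\geq\max_{\mb{p}}g(\mb{p},\mb{a},\mb{b})$ and $\dimh\Sigma\leq\max_{\mb{p}}g(\mb{p},\mb{a},\mb{b})$ separately, and then to show that the maximum is attained in the interior $\mathcal{P}^{\circ}(\Lambda)$.

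\textbf{Lower bound.} The function $\mb{p}\mapsto g(\mb{p},\mb{a},\mb{b})$ is continuous on the compact simplex $\mathcal{P}(\Lambda)$, so the maximum exists. By \cref{prop:baranski-measure-dim}, $\dimh\Sigma\geq\dimh\nu_{\mb{p}}=g(\mb{p},\mb{a},\mb{b})$ for every $\mb{p}$. This gives the lower bound immediately.

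\textbf{Upper bound.} I would use a covering argument by approximate squares combined with the method of types.

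1. Fix a small $r>0$. Every point lies in an approximate square $Q(\mtt{i},\mtt{j},r)$. Such a square is determined by a prefix word $(\mtt{i},\mtt{j})|_{\min\{k,\ell\}}\in\Lambda^{\min\{k,\ell\}}$ together with the extra $|k-\ell|$ symbols from $\Lambda_1$ or $\Lambda_2$, according to which of $k$ and $\ell$ is larger.

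2. Group these squares according to their type. The type records the empirical frequency vector $\mb{p}$ of the prefix on $\Lambda$, the empirical frequencies of the extra coordinate symbols, and the two lengths $k,\ell$. The number of types grows only polynomially in $\log(1/r)$.

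3. Within a given type, the lengths are tied to the Lyapunov exponents: $k\lambda_1\approx\log r$ and $\ell\lambda_2\approx\log r$. The number of squares of that type is about $\exp\bigl(\min\{k,\ell\}H(\mb{p})+|k-\ell|H(\text{marginal})\bigr)$.

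4. Summing $r^{s}$ over all squares and optimising the extra-symbol frequencies, the relevant marginal is $q(\mb{p})$ when $\lambda_1\geq\lambda_2$ (so $\ell\le k$) and $r(\mb{p})$ otherwise. The exponent reduces exactly to $g_1$ or $g_2$. Consequently, for every $s>\max g$ the $s$-dimensional covering sum tends to $0$.

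The step I expect to be hardest is making this count uniform. The dependence of $k$ and $\ell$ on the point, and the switch between the regimes $\mathcal{P}_V$ and $\mathcal{P}_H$, have to be handled with error terms of order $o(\log(1/r))$ that are uniform over types. I would control this by splitting the types into the two regimes and using continuity of $g$ to absorb the $\eps$-errors.

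\textbf{Interior maximiser.} Take a maximiser $\mb{q}$ and suppose some coordinate $q_{i,j}=0$. I would perturb to $\mb{q}_t=(1-t)\mb{q}+t\mb{e}$, where $\mb{e}$ is the uniform vector, and compare orders in $t$. In the entropy numerators, the term $-t\log t$ appears, from both $p_{i,j}\log p_{i,j}$ and possibly $q_i\log q_i$. The relevant combination $\sum p\log p-\sum p\log q_i$ still gains order $t\log(1/t)$, because a new entry appears in its column and the conditional entropy strictly increases at this order. Meanwhile the denominators $\lambda_1,\lambda_2$ are negative and change only linearly in $t$.

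Hence $g(\mb{q}_t)>g(\mb{q})$ for small $t>0$, which contradicts maximality, so $\mb{q}\in\mathcal{P}^{\circ}(\Lambda)$. This sign check needs care in both regimes, but it is a one-variable computation. Combined with \cref{prop:baranski-measure-dim}, it gives $\dimh\Sigma=\dimh\nu_{\mb{q}}=g(\mb{q},\mb{a},\mb{b})$.
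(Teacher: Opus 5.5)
Your lower bound and your interior-maximiser argument are sound. The paper does not prove this proposition itself: it imports both the formula and the interior maximiser from Theorem~A and Corollary~5.2 of \cite{Baranski2007}.

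One small correction in the perturbation step. If every zero entry of $\mb{q}$ lies in a column $i$ with $q_i(\mb{q})=0$, then the conditional entropy changes only at order $t$ along $\mb{q}_t$, so your stated reason fails. The $t\log(1/t)$ gain then comes from the marginal entropy, whose coefficient $1/(-\lambda_1)$ in $g_1$ is also positive. So the conclusion still holds, and likewise for $g_2$.

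The upper bound, however, has a genuine gap. Covering $\Sigma$ by \emph{all} approximate squares of a single radius $r$ and showing $\sum r^s\to 0$ only bounds the upper box dimension. This can be strictly larger than $\max_{\mb{p}}g(\mb{p},\mb{a},\mb{b})$. For example, take $\Lambda=\{(1,1),(1,2),(1,3),(2,1)\}$ with $a_i=1/2$ and $b_j=1/3$. There are about $4^{\ell}2^{k-\ell}=r^{-(1+\log_3 2)}$ approximate squares of radius $r$, while $\max g=\log_2 3<1+\log_3 2$.

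The error sits in step 4. A single-scale cover must contain squares of every type, so you are not free to choose the extra-symbol frequencies $\mb{u}$. In the regime $\ell\leq k$, the exponent of a type $(\mb{p},\mb{u})$, normalised by $\log(1/r)$, is
\[
\frac{H(\mb{p})}{-\lambda_2(\mb{p},\mb{b})}+\Bigl(1-\frac{\lambda_1(\mb{p},\mb{a})}{\lambda_2(\mb{p},\mb{b})}\Bigr)\frac{H(\mb{u})}{-\sum_i u_i\log a_i},
\]
and $g_1(\mb{p},\mb{a},\mb{b})$ is exactly its value at $\mb{u}=q(\mb{p})$. Your cover forces the supremum over $\mb{u}$. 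This replaces $H(q(\mb{p}))/(-\lambda_1(\mb{p},\mb{a}))$ by the dimension of the column projection, and that is what produces the generally larger box dimension instead of $g_1$.

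What is missing is a point-dependent, multi-scale cover. You need a lemma saying that every point has infinitely many scales $r$ at which its approximate square is of a ``good'' type, meaning one with at most $r^{-\max g-\eps}$ squares of that type at scale $r$. You then cover each point only at such a scale and sum over all dyadic $r\leq r_0$.

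In measure language, you would instead show $\liminf_{r\to0}\log\nu(Q(x,r))/\log r\leq\max g$ for \emph{every} $x$, for a suitable measure $\nu$. Billingsley's lemma then applies, since $d[\mb{a},\mb{b}]$ is an ultrametric.

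The existence of good scales at every point is a genuine combinatorial statement. For Bedford--McMullen carpets it is McMullen's telescoping argument with a specially weighted Bernoulli measure. With non-constant $a_i,b_j$ and switching between $\mathcal{P}_V$ and $\mathcal{P}_H$, it is precisely the nontrivial content of Bara\'nski's Theorem~A that the paper cites. Uniform control of $o(\log(1/r))$ error terms, which you flag as the main difficulty, does not address this issue.
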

 While the previous two propositions are enough to prove \cref{thm:variational-principle}, as we will see, the modified lower dimension is more sensitive, and for the proof of \cref{thm:hausdorff-equals-ml}, we require the following lemma.
 \begin{proposition}\label{prop:distinct-lyapunov}
     Let $(\Sigma,d[\mb{a},\mb{b}])$ be a symbolic Bara\'nski carpet and assume that $a_i\ne b_j$ for some $(i,j)\in\Lambda$. Then for any $t<\dimh \Sigma$, there exists $\mb{p}\in\mathcal{P}^{\circ}(\Lambda)$, such that
     \begin{equation*}
         \lambda_1(\mb{p},\mb{a})\ne \lambda_2(\mb{p},\mb{a}),
     \end{equation*}
     and
     \begin{equation*}
         \dimh \nu_{\mb{p}}>t.
     \end{equation*}
 \end{proposition}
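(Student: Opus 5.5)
We prove \cref{prop:distinct-lyapunov} by a perturbation argument that starts from the dimension-maximising Bernoulli measure. (The statement contains a typo: the inequality is meant to read $\lambda_1(\mb{p},\mb{a})\neq\lambda_2(\mb{p},\mb{b})$, and that is what we prove.)

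By \cref{prop:baranski-dim}, there is $\mb{q}\in\mathcal{P}^{\circ}(\Lambda)$ with $g(\mb{q},\mb{a},\mb{b})=\dimh\Sigma>t$. If $\lambda_1(\mb{q},\mb{a})\neq\lambda_2(\mb{q},\mb{b})$, then $\mb{p}=\mb{q}$ works and we are done. Otherwise $\mb{q}$ lies on the set
\begin{equation*}
    L=\{\mb{p}\in\mathcal{P}\colon \lambda_1(\mb{p},\mb{a})=\lambda_2(\mb{p},\mb{b})\}.
\end{equation*}
Observe that $\mb{p}\mapsto \lambda_1(\mb{p},\mb{a})-\lambda_2(\mb{p},\mb{b})=\sum_{(i,j)\in\Lambda}p_{i,j}(\log a_i-\log b_j)$ is the restriction to the simplex of a linear functional $\ell$ with coefficients $c_{i,j}=\log a_i-\log b_j$.

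The key step is to show that $\ell$ is not identically zero near $\mb{q}$ in the interior of the simplex. Suppose $\ell$ vanished on a relative neighbourhood of $\mb{q}$ in $\mathcal{P}$. Since $\mb{q}$ is interior, every direction $\mb{v}$ with $\sum_{(i,j)\in\Lambda} v_{i,j}=0$ is admissible, so $\ell(\mb{v})=0$ for all such $\mb{v}$. This forces all the coefficients $c_{i,j}$ to be equal to a common constant $c$. On the other hand, $\ell(\mb{q})=0$ together with $\sum_{(i,j)\in\Lambda}q_{i,j}=1$ gives $c=0$. Hence $a_i=b_j$ for every $(i,j)\in\Lambda$, contradicting the hypothesis. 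Therefore $\ell$ is not identically zero on any relative neighbourhood of $\mb{q}$: it is a nonzero affine function on the simplex, and $L$ is a proper hyperplane section of it.

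It follows that every neighbourhood of $\mb{q}$ contains points of $\mathcal{P}^{\circ}(\Lambda)\setminus L$. Concretely, take $\mb{p}_\eps=\mb{q}+\eps\mb{v}$ with $\ell(\mb{v})\neq0$ and $\sum_{(i,j)\in\Lambda} v_{i,j}=0$; for small $\eps>0$ this stays in $\mathcal{P}^{\circ}(\Lambda)$ and satisfies $\ell(\mb{p}_\eps)=\eps\,\ell(\mb{v})\neq0$.

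We then use continuity of $\mb{p}\mapsto g(\mb{p},\mb{a},\mb{b})$ on $\mathcal{P}$, recalled above from \cite{Baranski2007}. Since $g(\mb{q},\mb{a},\mb{b})>t$, we get $g(\mb{p}_\eps,\mb{a},\mb{b})>t$ for small $\eps$. Finally, \cref{prop:baranski-measure-dim} gives $\dimh\nu_{\mb{p}_\eps}=g(\mb{p}_\eps,\mb{a},\mb{b})>t$, so $\mb{p}=\mb{p}_\eps$ is the required vector.

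The only mildly delicate point is the continuity of $g$ across $L$, where the formula switches from $g_1$ to $g_2$. This is taken from \cite{Baranski2007}, and we should check that the Lyapunov exponents stay bounded away from $0$ so that the denominators cause no trouble. They do: all $a_i,b_j<1$, so both exponents are strictly negative on $\mathcal{P}$.
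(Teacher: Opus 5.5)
Your proof is correct and follows essentially the same route as the paper. Both arguments start from the interior maximiser $\mb{q}$ of \cref{prop:baranski-dim}, perturb it within $\mathcal{P}^{\circ}(\Lambda)$ along a zero-sum direction on which $\lambda_1(\cdot,\mb{a})-\lambda_2(\cdot,\mb{b})$ is non-zero, and conclude by continuity of $g$ together with \cref{prop:baranski-measure-dim}. The only difference is that the paper writes the direction down explicitly: it transfers mass $\delta$ from an index $(i_1,j_1)$ with $a_{i_1}\leq b_{j_1}$ to an index $(i_0,j_0)$ with $a_{i_0}>b_{j_0}$, whereas you obtain the direction's existence by your linear-algebra contradiction argument. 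You are also right that $\lambda_2(\mb{p},\mb{a})$ in the statement should read $\lambda_2(\mb{p},\mb{b})$.
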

\begin{proof}
    Let $\mb{q}\in\mathcal{P}(\Lambda)$ be the probability vector given by \cref{prop:baranski-dim}, which satisfies 
    \begin{equation*}
        \dimh \Sigma=g(\mb{q},\mb{a},\mb{b}).
    \end{equation*}
    If $\lambda_1(\mb{q},\mb{a})\ne\lambda_2(\mb{q},\mb{b})$, there is nothing to prove, so we may assume that $\lambda_1(\mb{q},\mb{a})=\lambda_2(\mb{q},\mb{b})$. Assume without loss of generality that
    \begin{equation*}
        a_{i_0}>b_{j_0},
    \end{equation*}
    for some $(i_0,j_0)\in\Lambda$. Let $(i_1,j_1)\in\Lambda$ be any index which satisfies $a_{i_1}\leq b_{j_1}$; such an index must exist since otherwise we would have
    \begin{equation*}
        \lambda_1(\mb{q},\mb{a})=\sum_{(i,j)\in\Lambda}q_{i,j}\log a_{i}>\sum_{(i,j)\in\Lambda^n}q_{i,j}\log b_{j}=\lambda_2(\mb{q},\mb{b})
    \end{equation*}
    Since $\mb{q}\in\mathcal{P}^{\circ}(\Lambda)$ and $\mb{p}\mapsto g(\mb{p},\mb{a},\mb{b})$ is continuous, we may choose $\delta>0$ small enough such that $0<q_{i_0,j_0}+\delta,q_{i_1,j_1}-\delta<1$, and for the probability vector $\mb{p}=(p_{i,j})_{(i,j)\in\Lambda}$ defined by $p_{i,j}=q_{i,j}$, for all $(i,j)\not\in\{(i_0,j_0),(i_1,j_1)\}$, $p_{i_0,j_0}=q_{i_0,j_0}+\delta$, and $p_{i,j_1}=q_{i_1,j_1}-\delta$, we have
     \begin{equation*}
         g(\mb{p},\mb{a},\mb{b})>t.
     \end{equation*}
     Furthermore, clearly $\mb{p}\in\mathcal{P}^{\circ}(\Lambda)$, and
     \begin{align*}
         \lambda_1(\mb{p},\mb{a})&=\sum_{(i,j)\in\Lambda}q_{i,j}\log a_{i}+\delta\log a_{i_0}-\delta\log a_{i_1}\\
         &>\sum_{(i,j)\in\Lambda}q_{i,j}\log b_{j}+\delta\log b_{j_0}-\delta\log b_{j_1}=\lambda_2(\mb{p},\mb{b}),
     \end{align*}
     as required.
\end{proof}

\subsection{Symbolic self-conformal sets}
Next we recall some basic results on self-conformal iterated function systems, and describe how the symbolic situation changes. Let $(f_i)_{i\in\Lambda}$ be a self-conformal IFS on $[0,1]$, and let $X$ denote its attractor. For a function $f\colon [0,1]\to \R$ we write
\begin{equation*}
    \|f\|=\sup_{x\in[0,1]}|f(x)|.
\end{equation*}
The following basic lemma will be frequently employed throughout the rest of the article.
\begin{lemma}\label{lemma:bounded-distortion}
    There is a constant $0<c\leq1$, such that the following hold:
    \begin{enumerate}
        \item For all $\mtt{i}\in\Lambda^*$ and $x_1,x_2\in X$, we have
        \begin{equation*}
            |f_{\mtt{i}}'(x_1)|\geq c|f_{\mtt{i}}'(x_2)|.
        \end{equation*}
        \item For all $\mtt{i},\mtt{j}\in\Lambda^*$, we have
        \begin{equation*}
            c\|f_{\mtt{i}}'\|\|f_{\mtt{j}}'\|\leq \|f_{\mtt{i}\mtt{j}}'\|\leq \|f_{\mtt{i}}'\|\|f_{\mtt{j}}'\|.
        \end{equation*}
        \item For all $\mtt{i}\in\Lambda^*$, we have
        \begin{equation*}
            c\|f_{\mtt{i}}'\|\leq \diam(f_{\mtt{i}}(X))\leq \|f_{\mtt{i}}'\|.
        \end{equation*}
    \end{enumerate}
\end{lemma}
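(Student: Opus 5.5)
This is the standard bounded distortion lemma for $C^{1+\alpha}$ contractions of $[0,1]$ satisfying the OSC. I would prove the three parts in order, each building on the previous one.

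\emph{Part (1).} The starting point is H\"older continuity of $\log|f_i'|$. Since each $f_i$ is $C^{1+\alpha}$ with $f_i'$ nonvanishing (implicit in the self-conformal assumption) and $[0,1]$ is compact, $|f_i'|$ is bounded away from $0$. Hence $\log|f_i'|$ is $\alpha$-H\"older with a constant $C$ uniform over $i\in\Lambda$.

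For $\mtt{i}=i_1\cdots i_n$, the chain rule gives
\begin{equation*}
\log|f_{\mtt{i}}'(x_1)|-\log|f_{\mtt{i}}'(x_2)|=\sum_{k=1}^{n}\Bigl(\log|f_{i_k}'(f_{\sigma^k\mtt{i}}(x_1))|-\log|f_{i_k}'(f_{\sigma^k\mtt{i}}(x_2))|\Bigr),
\end{equation*}
where $f_{\sigma^k\mtt{i}}=f_{i_{k+1}}\circ\cdots\circ f_{i_n}$. The $k$-th term is at most $C|f_{\sigma^k\mtt{i}}(x_1)-f_{\sigma^k\mtt{i}}(x_2)|^{\alpha}\leq C\lambda^{\alpha(n-k)}$ by uniform contraction. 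Summing the geometric series gives a bound $C/(1-\lambda^\alpha)$ independent of $n$. This proves (1) with $c_1=\exp(-C/(1-\lambda^\alpha))$, in fact for all $x_1,x_2\in[0,1]$ and not just on $X$. I would record this stronger form for use below.

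\emph{Part (2).} The upper bound is immediate from the chain rule, since $\|f_{\mtt{i}\mtt{j}}'\|\leq\|f_{\mtt{i}}'\|\|f_{\mtt{j}}'\|$. For the lower bound, pick $y$ with $|f_{\mtt{j}}'(y)|=\|f_{\mtt{j}}'\|$. Then
\begin{equation*}
\|f_{\mtt{i}\mtt{j}}'\|\geq|f_{\mtt{i}}'(f_{\mtt{j}}(y))|\,\|f_{\mtt{j}}'\|\geq c_1\|f_{\mtt{i}}'\|\|f_{\mtt{j}}'\|,
\end{equation*}
using the $[0,1]$ version of (1).

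\emph{Part (3).} For the upper bound, the mean value theorem gives $\diam f_{\mtt{i}}(X)\leq\|f_{\mtt{i}}'\|\diam X$, which is at most $\|f_{\mtt{i}}'\|$ when $\diam X\leq 1$; otherwise one absorbs the factor into the constant. For the lower bound, $X$ is not a single point because there are at least two maps under the OSC. Take $x,y\in X$ with $|x-y|=\diam X>0$. Then
\begin{equation*}
|f_{\mtt{i}}(x)-f_{\mtt{i}}(y)|=|f_{\mtt{i}}'(\xi)|\,|x-y|\geq c_1\|f_{\mtt{i}}'\|\diam X
\end{equation*}
for some $\xi$ between $x$ and $y$, again by (1) on $[0,1]$.

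Finally, take $c=\min\{c_1,c_1\diam X,1\}$, adjusting by a factor $\diam X$ if it exceeds $1$.

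The only genuine work is the telescoping H\"older estimate in (1). The main subtlety is whether derivatives are nonvanishing and whether the comparison is needed at points of $[0,1]$ rather than only on $X$; I handle both by proving (1) on all of $[0,1]$.
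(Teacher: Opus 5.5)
Your proposal is correct and is essentially the standard Mauldin--Urba\'nski telescoping argument that the paper cites for (1) and (3), with (2) deduced from (1) via the chain rule exactly as the paper indicates. Proving (1) on all of $[0,1]$ rather than only on $X$ is the right choice, because $\|\cdot\|$ is a supremum over $[0,1]$ and the lower bound in (2) needs this. The one point you assert without proof, $\diam X>0$, is in any case a non-degeneracy assumption implicit in the lemma, since (3) fails when $X$ is a single point.
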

The first item in the lemma is called the \emph{bounded distortion principle}, and the proof of this and the third item may be found in \cite{Mauldin1996}. The second item follows easily from the first one by using submultiplicativity of the norm and the chain rule. Going forward, we refer to $0<c\leq 1$ in the lemma as the \emph{bounded distortion constant}.

There is a natural metric $\rho$ on the symbolic space $\Sigma=\Sigma(\Lambda)$ which is connected to the geometry of the self-conformal set. This metric is defined for $\mtt{i}_1,\mtt{i}_2\in\Sigma$ by
\begin{equation}\label{eq:rho-def}
    \rho(\mtt{i}_1,\mtt{i}_2)=\|f_{\mtt{i}_1\wedge\mtt{i}_2}'\|.
\end{equation}
It is easy to see using the bounded distortion lemma that the natural projection $\pi\colon \Sigma\to X$ is Lipschitz when $\Sigma$ is equipped with the metric $\rho$, and even bi-Lipschitz, if the IFS satisfies the strong separation condition. However, the symbolic space $(\Sigma,\rho)$ does not fall into the framework of the previous section, since in general we only have
\begin{equation*}
    c^{|\mtt{i}\wedge\mtt{j}|}\|f_{i_1}'\|\|f_{i_2}'\|\cdots\|f_{i_{|\mtt{i}\wedge\mtt{j}|}}'\|\leq\|f_{\mtt{i}\wedge\mtt{i}'}'\|\leq \|f_{i_1}'\|\|f_{i_2}'\|\cdots\|f_{i_{|\mtt{i}\wedge\mtt{j}|}}'\|,
\end{equation*}
by the bounded distortion lemma, and the exponential distortion in the lower bound causes difficulties when bounding the dimension from below. However, we may define a family of metrics on $\Sigma$, which do fall into the framework of the previous section, and approximate the metric $\rho$ arbitrarily well in a Hölder sense. For any $n\in\N$ and $\mtt{i}\in\Lambda^n$, we let
\begin{equation*}
    a_{\mtt{i}}=\|f_{\mtt{i}}'\|.
\end{equation*}
 We let $\Sigma_n=\Sigma(\Lambda^n)$ denote the symbolic space associated to the alphabet $\Lambda^n$, and let $\mb{a}_n=(a_{\mtt{i}})_{\mtt{i}\in\Lambda^n}$. Define a metric $\rho_n$ on $\Sigma_n$ by 
\begin{equation*}
    \rho_{n}=\rho[\mb{a}_{n}].
\end{equation*}
Let $\gamma_n\colon \Sigma_n\to\Sigma$ denote the \emph{concatenation map}, which is the natural bijection between $\Sigma_n$ and $\Sigma$, defined for $\omtt{i}=(\mtt{i}_1)(\mtt{i}_2)\cdots$ by
\begin{equation*}
    \gamma_n(\omtt{i})= \mtt{i_1}\mtt{i_2}\cdots.
\end{equation*}
\begin{lemma}\label{lemma:holder-concatenation}
    There is a constant $C>0$, such that for every $n\in\N$, and $\omtt{i}_1,\omtt{i}_2\in\Sigma_n$, we have
    \begin{equation*}
        c_n\rho_n(\omtt{i}_1,\omtt{i}_2)^{1+\frac{C}{n}}\leq \rho(\gamma_n(\omtt{i}_1),\gamma_n(\omtt{i}_2))\leq \rho_n(\omtt{i}_1,\omtt{i}_2),
    \end{equation*}
    for some constant $c_n>0$.
\end{lemma}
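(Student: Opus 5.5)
Write $\omtt{i}_1=(\mtt{i}_1)(\mtt{i}_2)\cdots$ and $\omtt{i}_2=(\mtt{i}'_1)(\mtt{i}'_2)\cdots$. Let $m=|\omtt{i}_1\wedge\omtt{i}_2|$ be the number of common length-$n$ blocks, so that $\rho_n(\omtt{i}_1,\omtt{i}_2)=a_{\mtt{i}_1}\cdots a_{\mtt{i}_m}=\prod_{k=1}^m\|f'_{\mtt{i}_k}\|$. The concatenated words then share the prefix $\mtt{w}=\mtt{i}_1\cdots\mtt{i}_m$, followed by some further common prefix $\mtt{u}$ of $\mtt{i}_{m+1}$ and $\mtt{i}'_{m+1}$ with $0\le|\mtt{u}|<n$. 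Hence
\begin{equation*}
\rho(\gamma_n(\omtt{i}_1),\gamma_n(\omtt{i}_2))=\|f'_{\mtt{w}\mtt{u}}\|.
\end{equation*}
The plan is to compare $\|f'_{\mtt{w}\mtt{u}}\|$ with $\prod_k\|f'_{\mtt{i}_k}\|$ from both sides using \cref{lemma:bounded-distortion}(2).

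For the upper bound, submultiplicativity gives
\begin{equation*}
\|f'_{\mtt{w}\mtt{u}}\|\le\|f'_{\mtt{w}}\|\,\|f'_{\mtt{u}}\|\le\|f'_{\mtt{w}}\|\le\prod_{k=1}^m\|f'_{\mtt{i}_k}\|=\rho_n(\omtt{i}_1,\omtt{i}_2).
\end{equation*}
The middle step uses $\|f'_{\mtt{u}}\|\le 1$, which follows from the maps being contractions on $[0,1]$ (a contraction has $|f'|\le1$).

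For the lower bound, apply \cref{lemma:bounded-distortion}(2) $m$ times, once per block, to get
\begin{equation*}
\|f'_{\mtt{w}\mtt{u}}\|\ge c^{m}\|f'_{\mtt{u}}\|\prod_{k=1}^m\|f'_{\mtt{i}_k}\|\ge c^{m}\,\theta^{n}\,\rho_n(\omtt{i}_1,\omtt{i}_2),
\end{equation*}
where $\theta=\min_i\min_{x}|f_i'(x)|>0$. This minimum is positive because the maps are $C^{1+\alpha}$ contractions; I would make this assumption explicit if needed, since injective $C^1$ maps on a compact interval with bounded distortion have derivative bounded away from $0$ on $X$. Set $c_n=\theta^n$.

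The main step is absorbing the factor $c^m$, which is exponential in the number of blocks, into a power of $\rho_n$. Let $\lambda<1$ be a common uniform contraction ratio, so that $\|f'_{\mtt{i}_k}\|\le\lambda^n$ for each block and hence $\rho_n\le\lambda^{nm}$. Then
\begin{equation*}
c^m=\lambda^{nm\cdot\frac{\log c}{n\log\lambda}}\ge\rho_n^{\frac{\log c}{n\log\lambda}},
\end{equation*}
because the exponent $\frac{\log c}{n\log\lambda}$ is nonnegative and $\rho_n\le\lambda^{nm}\le1$. Taking $C=\log c/\log\lambda\ge0$, which does not depend on $n$, gives $\rho\ge c_n\rho_n^{1+C/n}$. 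The delicate point is exactly this bookkeeping: the distortion constant is paid once per block, not once per letter, which is why the error exponent decays like $1/n$. The partial block $\mtt{u}$ is absorbed into the $n$-dependent constant $c_n$.
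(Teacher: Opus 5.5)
Your proof is correct and follows essentially the same route as the paper: submultiplicativity gives the upper bound, and for the lower bound \cref{lemma:bounded-distortion}(2) costs one factor of $c$ per length-$n$ block, the partial block goes into $c_n$, and $c^m$ is absorbed into $\rho_n^{C/n}$ using $\|f'_{\mtt{i}_k}\|\le\lambda^n$ (the paper does this absorption block by block and gets the same $C=\log c/\log\lambda$). Your $\theta>0$ plays the role of the paper's $\lambda_1$, and both rest on the same standing assumption that the derivatives of the $f_i$ do not vanish.
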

\begin{proof}
    Let $n\in\N$ and notice that for any $\omtt{i}_1,\omtt{i}_2\in\Sigma_n$, we may write
    \begin{equation}\label{eq:wedge}
        \gamma_n(\mtt{i}_1)\wedge\gamma_n(\mtt{i}_2)=\mtt{i}_1\mtt{i}_2\cdots\mtt{i}_k\mtt{i}',
    \end{equation}
    where $k=|\omtt{i}_1\wedge\omtt{i}_2|$, $\mtt{i}_{\ell}\in\Lambda^n$ for all $1\leq \ell\leq k$ and $0\leq|\mtt{i}'|<n$.  We emphasise here that the former largest common subword is taken in terms of the alphabet $\Lambda$, and the latter in terms of $\Lambda^n$. By the submultiplicativity of the supremum norm, we have
    \begin{align*}        \rho(\gamma_n(\mtt{i}_1),\gamma_n(\mtt{i}_2))=\|f_{\gamma_n(\mtt{i}_1)\wedge\gamma_n(\mtt{i}_2)}'\|&\leq \|f_{\mtt{i}_1}'\|\|f_{\mtt{i}_2}'\|\cdots \|f_{\mtt{i}_k}'\|\|f_{\mtt{i}'}'\|\\
        &\leq \|f_{\mtt{i}_1}'\|\|f_{\mtt{i}_2}'\|\cdots \|f_{\mtt{i}_k}'\|=\rho_n(\omtt{i}_1,\omtt{i}_2).
    \end{align*}
    
    For the other inequality, we recall that since the IFS is uniformly contracting, there are constants $0<\lambda_1<\lambda_2<1$, depending only on the functions $f_i$, such that
    \begin{equation*}
        \lambda_1^n\leq \|f_{\mtt{i}}'\|\leq \lambda_2^n,
    \end{equation*}
    for all $\mtt{i}\in\Lambda^n$ and $n\in\N$. In particular, for $\omtt{i}_1,\omtt{i}_2\in\Sigma_n$, we have by bounded distortion that
    \begin{align*}
        \rho(\gamma_n(\mtt{i}_1),\gamma_n(\mtt{i}_2))&=\|f_{\gamma_n(\mtt{i}_1)\wedge\gamma_n(\mtt{i}_2)}\|\geq \lambda_1^{|\mtt{i}'|}\prod_{j=1}^kc\|f_{\mtt{i}_j}'\|\geq \lambda_1^n\prod_{j=1}^k\|f_{\mtt{i}_j}'\|^{1+\frac{\log c}{\log \|f_{\mtt{i}_j}'\|}}\\
        &\geq \lambda_1^n\prod_{j=1}^k\|f_{\mtt{i}_j}'\|^{1+\frac{1}{n}\frac{\log c}{\log \lambda_2}}=\lambda_1^n\rho_n(\omtt{i}_1,\omtt{i}_2)^{1+\frac{1}{n}\frac{\log c}{\log \lambda_2}},
    \end{align*}
    which gives the claim with $c_n=\lambda_1^n$ and $C=\frac{\log c}{\log \lambda_2}$.
\end{proof}

\subsection{Symbolic non-linear carpets}
Finally, we describe the symbolic spaces corresponding to the non-linear carpets discussed in the introduction. For this section, we fix two self-conformal IFSs $(f_i)_{i\in\Lambda_1}$ and $(g_j)_{j\in\Lambda_2}$ on $[0,1]$, with attractors $X_1$ and $X_2$, respectively, and consider the planar IFS $(S_{i,j}=(f_i,g_j))_{(i,j)\in\Lambda}$, where $\Lambda\subset \Lambda_1\times \Lambda_2$. We denote the attractor of the IFS by $X\subset X_1\times X_2$. Note that if $0<c_1,c_2\leq 1$ are the bounded distortion constants given by applying \cref{lemma:bounded-distortion} to the coordinate IFSs $(f_i)_{i\in\Lambda_1}$ and $(g_j)_{j\in\Lambda_2}$, respectively, by taking $c=\min\{c_1,c_2\}$, the bounded distortion lemma applies to both coordinate IFSs with the same constant.

We define a metric $d$ on $\Sigma=\Sigma(\Lambda)$ by setting
\begin{equation*}
    d((\mtt{i}_1,\mtt{j}_1),(\mtt{i}_2, \mtt{j}_2))=\max\{\rho^1(\mtt{i}_1,\mtt{i}_2),\rho^2(\mtt{i}_2, \mtt{j}_2)\},
\end{equation*}
where $\rho^1$ and $\rho^2$ are the metrics on $\Sigma(\Lambda_1)$ and $\Sigma(\Lambda_2)$, respectively, given by \cref{eq:rho-def}. Again, this metric does not quite give us a symbolic Bara\'nski carpet but we may proceed similarly to the previous section, and define for any $n\in\N$ and $(\mtt{i},\mtt{j})\in\Lambda^n$,
\begin{equation}\label{eq:dist}
    \mb{a}_n=(a_{\mtt{i}})_{\mtt{i}\in\Lambda_1^n}\coloneqq(\|f_{\mtt{i}}'\|)_{\mtt{i}\in\Lambda_1^n}\text{ and  }\mb{b}_n=(b_{\mtt{j}})_{\mtt{j}\in\Lambda_2^n}\coloneqq(\|g_{\mtt{j}}'\|)_{\mtt{j}\in\Lambda_2^n}
\end{equation}
and let $\Sigma_n=\Sigma(\Lambda^n)$ denote the symbolic space associated to the alphabet $\Lambda^n$, equipped with the metric $d_n=d[\mb{a}_n,\mb{b}_n]$. As we did with the bounded distortion constant, we may clearly assume that \cref{lemma:holder-concatenation} holds for the same constants for the metrics $\rho^1$ and $\rho[\mb{a}_n]$, as well as $\rho^2$ and $\rho[\mb{b}_n]$.

The concatenation map $\gamma_n\colon \Sigma_n\to\Sigma$ is defined naturally for $(\omtt{i},\omtt{j})\in\Sigma_n$ by
\begin{equation*}
    \gamma_n((\omtt{i},\omtt{j}))= (\gamma_n(\omtt{i}),\gamma_n(\omtt{j})),
\end{equation*}
where we slightly abuse notation by denoting by $\gamma_n$ both the concatenation maps on $\Sigma(\Lambda_1^n)$ and $\Sigma(\Lambda_2^n)$. The next proposition follows easily from \cref{lemma:holder-concatenation}.

\begin{proposition}\label{prop:sigma-dim-limit}
    The symbolic space $\Sigma$ equipped with the metric $d$ satisfies
    \begin{equation*}
        \dimh\Sigma=\lim_{n\to\infty}\dimh \Sigma_n=\lim_{n\to\infty}\max_{\mb{p}\in\mathcal{P}(\Lambda^n)}g(\mb{p},\mb{a}_n,\mb{b}_n),
    \end{equation*}
    where each $\Sigma_n$ is equiped with the metric $d_n=d[\mb{a}_n,\mb{b}_n]$.
\end{proposition}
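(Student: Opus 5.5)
The plan is to compare $(\Sigma,d)$ with each $(\Sigma_n,d_n)$ through the concatenation map $\gamma_n\colon\Sigma_n\to\Sigma$, which is a bijection. Applying \cref{lemma:holder-concatenation} coordinatewise, with the same constants for both coordinate IFSs, gives for all $(\omtt{i}_1,\omtt{j}_1),(\omtt{i}_2,\omtt{j}_2)\in\Sigma_n$
\begin{equation*}
    c_n\, d_n\big((\omtt{i}_1,\omtt{j}_1),(\omtt{i}_2,\omtt{j}_2)\big)^{1+\frac{C}{n}}\leq d\big(\gamma_n(\omtt{i}_1,\omtt{j}_1),\gamma_n(\omtt{i}_2,\omtt{j}_2)\big)\leq d_n\big((\omtt{i}_1,\omtt{j}_1),(\omtt{i}_2,\omtt{j}_2)\big).
\end{equation*}
This holds because the maximum of two quantities commutes with the monotone map $x\mapsto c_nx^{1+C/n}$, since $d_n$ is the maximum of the two coordinate metrics $\rho[\mb{a}_n]$ and $\rho[\mb{b}_n]$, and $d$ is the maximum of $\rho^1$ and $\rho^2$. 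So $\gamma_n$ is $1$-Lipschitz, and $\gamma_n^{-1}$ is Hölder with exponent $(1+C/n)^{-1}$.

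Next I use the standard facts that Lipschitz maps do not increase Hausdorff dimension and that $\alpha$-Hölder maps increase it by at most a factor $1/\alpha$. Together these give
\begin{equation*}
    \dimh\Sigma\leq\dimh\Sigma_n\leq\Big(1+\frac{C}{n}\Big)\dimh\Sigma.
\end{equation*}
Since $\dimh\Sigma$ is finite, for instance at most $\log|\Lambda|/\log(1/\max\{\lambda_2\})$ by the uniform contraction bounds, letting $n\to\infty$ gives $\lim_n\dimh\Sigma_n=\dimh\Sigma$.

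Finally, each $(\Sigma_n,d_n)$ is a symbolic Bara\'nski carpet with alphabet $\Lambda^n\subset\Lambda_1^n\times\Lambda_2^n$ and contraction vectors $\mb{a}_n,\mb{b}_n$. Therefore \cref{prop:baranski-dim} gives $\dimh\Sigma_n=\max_{\mb{p}\in\mathcal{P}(\Lambda^n)}g(\mb{p},\mb{a}_n,\mb{b}_n)$, which is the second equality.

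The only point needing care is the Hölder transfer of dimension in this symbolic metric setting. It is routine: a cover of $\Sigma$ by sets $U_k$ of $d$-diameter $\delta_k$ pulls back to a cover of $\Sigma_n$ by sets of $d_n$-diameter at most $(\delta_k/c_n)^{1/(1+C/n)}$. The constant $c_n$ does not affect the exponent.
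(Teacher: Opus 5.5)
Your proposal is correct and follows essentially the same route as the paper. Both use the coordinatewise bi-H\"older comparison for $\gamma_n$ obtained from \cref{lemma:holder-concatenation}, then the standard behaviour of Hausdorff dimension under Lipschitz and H\"older maps (which the paper cites from Falconer, writing it as $\limsup_{n}(1+C/n)^{-1}\dimh\Sigma_n\leq\dimh\Sigma\leq\liminf_{n}\dimh\Sigma_n$), and finally \cref{prop:baranski-dim} for the second equality; your finite-$n$ sandwich $\dimh\Sigma\leq\dimh\Sigma_n\leq(1+C/n)\dimh\Sigma$, together with the remark that $\dimh\Sigma<\infty$, is just a slightly more explicit form of the same argument.
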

\begin{proof}
    It follows from \cref{lemma:holder-concatenation}, that for all $n\in\N$ and $(\omtt{i}_1,\omtt{j}_1),(\omtt{i}_2,\omtt{j}_2)\in\Sigma_n$, we have
    \begin{equation*}
        c_nd_n((\omtt{i}_1,\omtt{j}_1),(\omtt{i}_2,\omtt{j}_2))^{1+\frac{C}{n}}\leq d(\gamma_n((\omtt{i}_1,\omtt{j}_1)),\gamma_n((\omtt{i}_2,\omtt{j}_2))\leq d_n((\omtt{i}_1,\omtt{j}_1),(\omtt{i}_2,\omtt{j}_2)).
    \end{equation*}
    Therefore by standard results, see e.g. \cite[Proposition 3.3]{Falconer2014},
    \begin{equation*}
        \limsup_{n\to\infty}\frac{1}{1+\frac{C}{n}}\dimh \Sigma_n\leq \dimh \Sigma \leq \liminf_{n\to\infty}\dimh \Sigma_n,
    \end{equation*}
    which gives the claim by \cref{prop:baranski-dim}.
\end{proof}
The following theorem is the symbolic analogue of \cref{thm:variational-principle}.
\begin{theorem}\label{thm:symbolic-variational-principle}
    For any $\varepsilon>0$, there exists a measure $\nu$ on $\Sigma$, which is ergodic with respect to $\sigma$, and satisfies
    \begin{equation*}
        \dimh \nu\geq \dimh\Sigma-\varepsilon.
    \end{equation*}
\end{theorem}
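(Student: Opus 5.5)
We will take the measure to be the push-forward under the concatenation map of a Bernoulli measure on a deep iterate system. Fix $\varepsilon>0$. By \cref{prop:sigma-dim-limit}, there is $n\in\N$ such that
\begin{equation*}
\frac{1}{1+\frac{C}{n}}\dimh\Sigma_n\geq \dimh\Sigma-\varepsilon/2
\end{equation*}
(using also that $\dimh\Sigma_n\geq\dimh\Sigma$ for every $n$ in the limit). By \cref{prop:baranski-dim} applied to the symbolic Bara\'nski carpet $(\Sigma_n,d_n)$, there is $\mb{q}\in\mathcal{P}^{\circ}(\Lambda^n)$ with $\dimh\nu_{\mb{q}}=\dimh\Sigma_n$, where $\nu_{\mb{q}}$ is the Bernoulli measure on $\Sigma_n$. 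We then set $\tilde\nu=(\gamma_n)_*\nu_{\mb{q}}$ on $\Sigma$.

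The dimension estimate comes from the bi-H\"older control in the proof of \cref{prop:sigma-dim-limit}: the map $\gamma_n\colon(\Sigma_n,d_n)\to(\Sigma,d)$ is Lipschitz, and its inverse is $1/(1+C/n)$-H\"older. For any Borel $E\subset\Sigma$ with $\tilde\nu(E)>0$, the set $\gamma_n^{-1}(E)$ has positive $\nu_{\mb{q}}$-measure. Hence
\begin{equation*}
\dimh E\geq \frac{1}{1+C/n}\dimh\gamma_n^{-1}(E)\geq \frac{1}{1+C/n}\dimh\nu_{\mb{q}}.
\end{equation*}
Therefore $\dimh\tilde\nu\geq\dimh\Sigma-\varepsilon/2$, whichever convention (lower Hausdorff dimension) is used for $\dimh$ of a measure.

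The obstacle is ergodicity. The measure $\tilde\nu$ is $\sigma^n$-invariant and $\sigma^n$-ergodic, since $\gamma_n$ conjugates the shift on $\Sigma_n$ to $\sigma^n$ and Bernoulli measures are ergodic. It is not in general $\sigma$-invariant. The standard fix is to average:
\begin{equation*}
\nu=\frac1n\sum_{k=0}^{n-1}\sigma^k_*\tilde\nu.
\end{equation*}
This measure is $\sigma$-invariant. It is also $\sigma$-ergodic: if $A$ is $\sigma$-invariant then it is $\sigma^n$-invariant, so $\tilde\nu(A)\in\{0,1\}$. Since $\sigma^{-k}A=A$, every $\sigma^k_*\tilde\nu(A)=\tilde\nu(A)$, and so $\nu(A)\in\{0,1\}$.

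It remains to bound $\dimh\sigma^k_*\tilde\nu$ from below for $0\le k<n$, since the dimension of the average is the minimum over the summands. On each cylinder $[(\mtt{i},\mtt{j})]$ with $|\mtt{i}|=|\mtt{j}|=k$, the shift $\sigma^k$ maps onto its image, and the inverse branch $(\mtt{i}',\mtt{j}')\mapsto(\mtt{i}\mtt{i}',\mtt{j}\mtt{j}')$ is bi-Lipschitz for $d$. Indeed, $\|f'_{\mtt{i}\mtt{u}}\|$ is comparable to $\|f'_{\mtt{u}}\|$ up to the constants $c\|f_{\mtt{i}}'\|$ and $\|f_{\mtt{i}}'\|$ by \cref{lemma:bounded-distortion}(2), and similarly for $g$. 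Because there are finitely many such $k$-words, $\sigma^k$ restricted to each of finitely many pieces is bi-Lipschitz. Then $\sigma^k_*\tilde\nu$ is a finite sum of bi-Lipschitz images of restrictions of $\tilde\nu$. Restriction to a set of positive measure does not lower the lower Hausdorff dimension, so $\dimh\sigma^k_*\tilde\nu\ge\dimh\tilde\nu$.

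The delicate points are ensuring the dimension convention for measures is preserved under averaging and under these finite bi-Lipschitz decompositions, which we would handle via the lower local dimension characterisation. Combining the steps gives $\dimh\nu\geq\dimh\Sigma-\varepsilon$.
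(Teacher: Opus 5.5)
Your proposal is correct and is essentially the paper's proof. You push the optimal Bernoulli measure on $\Sigma_n$ from \cref{prop:baranski-dim} forward by $\gamma_n$, average over $\sigma^k$ for $0\le k<n$ to get a $\sigma$-ergodic measure, and control the dimension with the H\"older bound from \cref{lemma:holder-concatenation}. The only difference is the averaging step. The paper just notes that $(\gamma_n)_*\nu_{\mb{q}}$ is absolutely continuous with respect to the average, which immediately gives the inequality only for the upper Hausdorff dimension $\inf\{\dimh E:\nu(\Sigma\setminus E)=0\}$ of a measure. Your summand-by-summand bound through the inverse branches of $\sigma^k$ also covers the lower Hausdorff dimension $\inf\{\dimh E:\nu(E)>0\}$; for this it already suffices that the inverse branches are Lipschitz, which follows from submultiplicativity alone.
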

\begin{proof}
    For every $n\in\N$, let $\mb{q}_n\in\mathcal{P}(\Lambda^n)$ be the probability vector given by applying \cref{prop:baranski-dim} to the symbolic space $(\Sigma_n,d[\mb{a}_n,\mb{b}_n])$, and let $\nu_n\coloneqq \nu_{\mb{q}_n}$ be the corresponding Bernoulli measure on $\Sigma_n$. Define a measure $\mu_n=\nu_n\circ\gamma_n^{-1}$ on $\Sigma$, and then
    \begin{equation*}
        \overline{\mu}_n=\frac{1}{n}\sum_{k=0}^{n-1}\mu_k\circ\sigma^{-k}.
    \end{equation*}
    It follows from $\sigma^k$ ergodicity of $\mu_k$ that $\overline{\mu}_n$ is ergodic with respect to $\sigma$. Moreover, for every $n\in\N$, the measure $\mu_n$ is clearly absolutely continuous with respect to $\overline{\mu}_n$, and therefore
    \begin{equation*}
        \dimh \overline{\mu}_n\geq \dimh \mu_n.
    \end{equation*}
    Applying \cref{lemma:holder-concatenation} as in the proof of \cref{prop:sigma-dim-limit}, we see that 
    \begin{equation*}
        \dimh \mu_n\geq \frac{1}{1+\frac{C}{n}}\dimh \nu_n=\frac{1}{1+\frac{C}{n}}\dimh \Sigma_n,
    \end{equation*}
    so by \cref{prop:sigma-dim-limit}, for any $\varepsilon>0$, we may take $n\in\N$ large enough that
    \begin{equation*}
         \dimh \mu_n\geq\dimh \Sigma-\varepsilon,
    \end{equation*}
    which finishes the proof.
\end{proof}

\section{Proofs of the main results}
In this section we transfer our symbolic results to the geometric setting and prove our main results, \cref{thm:variational-principle,thm:hausdorff-equals-ml}. For the remainder of the section, we fix a non-linear carpet $X$ with a defining IFS $(S_{i,j}=(f_i,g_j))_{(i,j)\in\Lambda}$, which satisfies the coordinate OSC. For any $n\in\N$, we let $\mb{a}_n$ and $\mb{b}_n$ be defined as in \cref{eq:dist}.
\subsection{Hausdorff dimension of non-linear carpets}
Our proof of \cref{thm:variational-principle} is a variant of a standard geometric lemma, which allows us to calculate the local dimensions along approximate squares at typical points. For $(\mtt{i},\mtt{j})\in\Sigma$ and $n\in\N$, write
\begin{equation*}
    \Delta_n=\{(\mtt{i},\mtt{j})\in\Lambda^*\colon \|f_{\mtt{i}}'\|< 2^{-n}\leq \|f_{\mtt{i}^-}'\|,\,\|g_{\mtt{j}}'\|< 2^{-n}\leq  \|g_{\mtt{j}^-}'\|\}.
\end{equation*}
Moreover, for any $(\mtt{i},\mtt{j})\in\Delta_n$, we write
\begin{equation*}
    Q_n(\mtt{i},\mtt{j})=\{(\mtt{i}',\mtt{j}')\in\Sigma\colon \mtt{i'}|_{|\mtt{i}|}=\mtt{i},\text{ and }\mtt{j'}|_{|\mtt{j}|}=\mtt{j}\},
\end{equation*}
for the symbolic approximate square of level $n$, corresponding to $(\mtt{i},\mtt{j})$. Note that $Q_n(\mtt{i},\mtt{j})$ is a ball of radius $2^{-n}$ in the metric $d$.

To pass to the geometric setting, let
\begin{equation*}
    \mathcal{Q}_n=\{f_{\mtt{i}}(U)\times g_{\mtt{j}}(V)\colon (\mtt{i},\mtt{j})\in\Delta_n\}.
\end{equation*}
 Since the coordinate IFSs satisfy the OSC, the collection $\mathcal{Q}_n$ is disjoint, and moreover, there are constants $0<c_1\leq c_2$, such that for any $Q\in\mathcal{Q}_n$, there is a point $(x,y)\in Q$, such that
\begin{equation*}
    B((x,y),c_12^{-n})\subset Q\subset B((x,y),c_22^{-n}).
\end{equation*}
This shows that the collection $\mathcal{Q}_n$ is a \emph{general filtration} in the sense of \cite{Kaenmaki2016}. We let
\begin{equation}\label{eq:Q_n-limit-set}
    E\coloneqq\bigcap_{n=1}^{\infty}\bigcup_{Q\in\mathcal{Q}_n}Q\subset X,
\end{equation}
and for any $(x,y)\in E$, we denote by $Q_n((x,y))$ the unique element of $\mathcal{Q}_n$ that contains $(x,y)$. The next lemma follows from \cite[Proposition 3.1]{Kaenmaki2016}.
\begin{lemma}\label{lemma:dim-along-approx-squares}
    Let $\mu$ be a non-atomic Borel probability measure supported on $E$. Then for $\mu$ almost every $(x,y)\in X$, we have
    \begin{align*}
        &\ldimloc(\mu,(x,y))=\liminf_{n\to\infty}\frac{\log\mu(Q_n((x,y)))}{-n\log 2},\\
        &\udimloc(\mu,(x,y))=\limsup_{n\to\infty}\frac{\log\mu(Q_n((x,y)))}{-n\log 2}.
    \end{align*}
\end{lemma}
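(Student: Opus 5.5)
The statement to prove is \cref{lemma:dim-along-approx-squares}. The plan is to check that $(\mathcal{Q}_n)$ satisfies the hypotheses of \cite[Proposition 3.1]{Kaenmaki2016}, and then to compare balls with approximate squares through a density-point argument. I would carry this out in three steps.

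First, fix $n$ and let $Q=Q_n((x,y))$. By construction, $Q$ contains the ball $B((x',y'),c_12^{-n})$ and is contained in $B((x',y'),c_22^{-n})$. Since $(x,y)\in Q$, we get
\begin{equation*}
    Q\subset B((x,y),2c_22^{-n}).
\end{equation*}
Hence $\mu(Q_n((x,y)))\leq\mu(B((x,y),2c_22^{-n}))$, and this holds for every point. Taking logarithms, dividing by $-n\log 2$, and using that $r\mapsto\mu(B(\cdot,r))$ is monotone (so it suffices to consider radii $2^{-n}$), we obtain
\begin{equation*}
    \ldimloc\leq\liminf_{n\to\infty}\frac{\log\mu(Q_n((x,y)))}{-n\log 2},
\end{equation*}
and the same inequality for the $\limsup$ against $\udimloc$.

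Second, for the reverse inequalities, I use the disjointness of $\mathcal{Q}_n$ together with the inner-ball property. This gives a uniform bound $M$ on the number of elements of $\mathcal{Q}_n$ meeting $B((x,y),2^{-n})$, by a volume comparison: each such square contains a ball of radius $c_12^{-n}$ and lies in $B((x,y),(1+2c_2)2^{-n})$. So
\begin{equation*}
    \mu(B((x,y),2^{-n}))\leq M\max\{\mu(Q)\colon Q\in\mathcal{Q}_n,\ Q\cap B((x,y),2^{-n})\neq\emptyset\}.
\end{equation*}
The difficulty is that the maximising square may not be the one containing $(x,y)$. To handle this, I would use the standard Borel--Cantelli argument. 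Fix $\delta>0$ and let $B_n$ be the set of points $z\in E$ such that
\begin{equation*}
    \mu(Q_n(z))\leq 2^{-n\delta}\mu(Q)
\end{equation*}
for some $Q\in\mathcal{Q}_n$ with $Q\cap B(z,2^{-n})\neq\emptyset$. Summing over pairs of neighbouring squares, at most $M$ per square, gives
\begin{equation*}
    \mu(B_n)\leq\sum_{Q}\sum_{Q'\sim Q}2^{-n\delta}\mu(Q)\leq M^2 2^{-n\delta},
\end{equation*}
which is summable. Hence for $\mu$-almost every $z$ and all large $n$ we have $\mu(B(z,2^{-n}))\leq M2^{n\delta}\mu(Q_n(z))$. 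Letting $\delta\to0$ along a countable sequence gives the reverse inequality for both the $\liminf$ and the $\limsup$.

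Third, I need the assumption that $\mu$ is non-atomic and supported on $E$, which makes $Q_n(z)$ well defined $\mu$-almost everywhere. Boundary issues do not arise, since the elements of $\mathcal{Q}_n$ are disjoint and $E$ is defined through them. I expect the second step to be the only real obstacle, namely controlling neighbouring squares. The nesting of the $\mathcal{Q}_n$ does not matter for the counting argument above. In the paper I would simply cite \cite[Proposition 3.1]{Kaenmaki2016} after verifying that $\mathcal{Q}_n$ is a general filtration.
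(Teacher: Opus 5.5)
Your proposal is correct and follows the paper: the paper proves the lemma only by noting that $(\mathcal{Q}_n)$ is a general filtration, meaning the squares are disjoint and each lies between balls of radii $c_1 2^{-n}$ and $c_2 2^{-n}$, and then citing \cite[Proposition 3.1]{Kaenmaki2016}. Your self-contained sketch is essentially the standard proof of that cited result and works as written: the inclusion $Q_n(z)\subset B(z,2c_2 2^{-n})$ gives one inequality, and bounded overlap plus Borel--Cantelli gives the other. Two harmless corrections: the neighbour-count constant is just some uniform constant, not necessarily $M^2$; and it is $\mu(E)=1$, not non-atomicity, that makes $Q_n(z)$ defined $\mu$-almost everywhere.
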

We are now ready to prove \cref{thm:variational-principle}.
\begin{theorem}\label{thm:baranski-like-dimh}
    If $X$ is a non-linear carpet which satisfies the coordinate OSC, then for every $\varepsilon>0$, there exists an ergodic measure $\mu$ on $X$, such that
    \begin{equation*}
        \dimh \mu\geq \dimh X-\varepsilon.
    \end{equation*}
    Moreover,
    \begin{equation*}
        \dimh X=\lim_{n\to\infty}\max_{\mb{p}\in\mathcal{P}(\Lambda^n)}g(\mb{p},\mb{a}_n,\mb{b}_n),
    \end{equation*}
    with $\mb{a}_n, \mb{b}_n$ defined by \eqref{eq:dist}. 
\end{theorem}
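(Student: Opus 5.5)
The approach is to transfer the symbolic results of \cref{sec:symbolic_spaces} to $X$ through the natural projection $\pi\colon\Sigma\to X$. The target is the two inequalities
\[
\dimh X\leq \dimh\Sigma \qquad\text{and}\qquad \dimh \pi_*\nu\geq \dimh\nu
\]
for suitable ergodic $\nu$. Combined with \cref{prop:sigma-dim-limit} and \cref{thm:symbolic-variational-principle}, these give both claims at once. Indeed, they yield
\[
\dimh X-\varepsilon\leq \dimh\Sigma-\varepsilon\leq \dimh\nu\leq\dimh\pi_*\nu\leq\dimh X,
\]
and this chain also forces $\dimh X=\dimh\Sigma=\lim_n\max_{\mb{p}}g(\mb{p},\mb{a}_n,\mb{b}_n)$.

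\textbf{Upper bound.} By \cref{lemma:bounded-distortion}(3) we have $\diam f_{\mtt{i}}(X_1)\leq\|f_{\mtt{i}}'\|$, and similarly for the $g_{\mtt{j}}$. Hence $\pi$ is Lipschitz from $(\Sigma,d)$ to $X$ with the maximum norm: two points whose coordinate words share prefixes $\mtt{i}$ and $\mtt{j}$ lie in $f_{\mtt{i}}([0,1])\times g_{\mtt{j}}([0,1])$. Since Lipschitz maps do not increase Hausdorff dimension, $\dimh X\leq\dimh\Sigma$. Equivalently, one can cover $X$ by the projections of the symbolic approximate squares $Q_n(\mtt{i},\mtt{j})$, $(\mtt{i},\mtt{j})\in\Delta_n$, each of which has diameter at most $2^{-n}$.

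\textbf{Lower bound for measures.} Fix $\varepsilon>0$ and take the ergodic measure $\overline{\mu}_n$ from the proof of \cref{thm:symbolic-variational-principle}. Set $\mu=\pi_*\overline{\mu}_n$, which is ergodic in the paper's sense. I would use \cref{lemma:dim-along-approx-squares}, which first requires $\mu(E)=1$ and $\mu$ non-atomic. Non-atomicity holds because $\dimh\overline{\mu}_n>0$, or we may simply assume $\dimh\Sigma>0$. For $\mu(E)=1$ I would take $E$ to be the projection of the symbolic points whose orbits avoid the boundary of the open sets $U,V$. To show this has full measure, I would choose $U,V$ to be open intervals and use the OSC together with ergodicity. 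If this is delicate, I would instead pass to a subsystem of $\Lambda^n$ that avoids the extreme symbols, or argue that the boundary of $U\times V$ carries no mass because the boundary is hit by only countably many coded points.

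On $E$, the square $Q_n((x,y))$ is exactly $\pi(Q_n(\mtt{i},\mtt{j}))$ intersected with $E$, and the OSC disjointness makes $\pi$ injective on this coded set. Therefore
\[
\mu(Q_n((x,y)))=\overline{\mu}_n(Q_n(\mtt{i},\mtt{j})),
\]
where the right-hand side is the measure of a $d$-ball of radius $2^{-n}$. By \cref{lemma:dim-along-approx-squares}, the lower local dimensions of $\mu$ therefore coincide almost everywhere with the symbolic lower local dimensions of $\overline{\mu}_n$ along the balls of radii $2^{-n}$. It is enough to look along these dyadic radii, since any radius lies within a factor of $2$ of some $2^{-n}$. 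This gives $\dimh\mu\geq\dimh\overline{\mu}_n\geq\dimh\Sigma-\varepsilon$.

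\textbf{Main obstacle.} I expect the hardest step to be justifying $\mu(E)=1$ and the identification $\mu(Q_n((x,y)))=\overline{\mu}_n(Q_n(\mtt{i},\mtt{j}))$. This is the only place where the coordinate OSC, and not merely the planar OSC, is used: the rectangles $f_{\mtt{i}}(U)\times g_{\mtt{j}}(V)$ must be pairwise disjoint across different $(\mtt{i},\mtt{j})\in\Delta_n$, which needs disjointness separately in each coordinate. The measure-zero boundary issue would be handled by the standard argument that an ergodic invariant measure gives zero mass to the symbolic preimage of $\partial U$ unless it is supported on fixed boundary orbits. Those degenerate measures have dimension zero and can be excluded.
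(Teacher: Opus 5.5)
Your overall route is the paper's. You use that $\pi$ is Lipschitz for the upper bound, push forward the ergodic $\overline{\mu}_n$ from \cref{thm:symbolic-variational-principle}, identify $\mu(f_{\mtt{i}}(U)\times g_{\mtt{j}}(V))$ with $\overline{\mu}_n(Q_n(\mtt{i},\mtt{j}))$, apply \cref{lemma:dim-along-approx-squares}, and read off the limit formula from \cref{prop:sigma-dim-limit}.

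The gap is in the step you flag yourself, $\mu(E)=1$, where the justifications you offer do not work.
\begin{itemize}
\item It is not true that the boundaries are hit by only countably many points of $X$. If $U=(0,1)$ and a column $i_0$ with $f_{i_0}(0)=0$ contains two maps, then $X\cap(\{0\}\times[0,1])$ is uncountable and lies on $\partial(U\times V)$.
\item The ergodicity argument cannot be run for $\overline{\mu}_n$ on $\Sigma$, because the symbolic preimage of a boundary line is not a countable subset of $\Sigma$. It has to be run for the coordinate marginals.
\item In that setting the degenerate case is not one of dimension zero. Take $\Lambda=\{1\}\times\{1,2\}$ with $f_1(x)=x/2$, $g_1(y)=y/2$, $g_2(y)=(y+1)/2$ and $U=V=(0,1)$. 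Then $X=\{0\}\times[0,1]$ and $\dimh\overline{\mu}_n$ is close to $1$. Yet every rectangle $f_{\mtt{i}}(U)\times g_{\mtt{j}}(V)$ misses the line $\{x=0\}$, so $E=\emptyset$ and \cref{lemma:dim-along-approx-squares} cannot be applied.
\end{itemize}
So such cases cannot be excluded on dimensional grounds; they must be handled separately.

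The repair is the paper's argument.
\begin{itemize}
\item First dispose of the case with fewer than two non-empty columns or rows. There $X$ lies in a line and is self-conformal, so the classical formula applies.
\item Otherwise, each $f_{\mtt{i}}(\overline{U}\setminus U)$ and $g_{\mtt{j}}(\overline{V}\setminus V)$ is countable. Hence $X\cap\bigl(f_{\mtt{i}}(\overline{U})\times g_{\mtt{j}}(\overline{V})\bigr)=\pi(Q_n(\mtt{i},\mtt{j}))$ differs from the open rectangle only inside countably many horizontal and vertical lines.
\item It therefore suffices that each such line is $\mu$-null, i.e.\ that both coordinate projections of $\mu$ are non-atomic.
\item These projections are averages of ($k$-step) self-conformal measures. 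Since $\mb{q}_n\in\mathcal{P}^{\circ}(\Lambda^n)$ and there are at least two non-empty columns and rows, each of them puts positive weight on at least two distinct maps, so none has atoms.
\end{itemize}
This also gives directly the non-atomicity of $\mu$ required in \cref{lemma:dim-along-approx-squares}. Deriving it from $\dimh\overline{\mu}_n>0$ alone, as you propose, would still need an argument that the fibres of $\pi$ are countable.
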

\begin{proof}
    We may assume without loss of generality that the defining IFS has at least two non-empty columns and two non-empty rows; otherwise it is easy to see that the attractor is contained in a line and is therefore a self-conformal set, in which case the dimension formula simplifies to the classical dimension formula for self-conformal measures.
    
    Let $\varepsilon>0$, let $\nu$ be the measure on $\Sigma$ given by \cref{thm:symbolic-variational-principle}, and write $\mu=\pi_*\nu$. We claim that in order to show that $\dimh\mu\geq \dimh X-\varepsilon$, it suffices to show that for any $(\mtt{i},\mtt{j})\in\Delta_n$,
    \begin{equation}\label{eq:symbolic-measure-eq}
         \mu(f_{\mtt{i}}(U)\times g_{\mtt{j}}(V))= \nu(Q_n(\mtt{i},\mtt{j})).
    \end{equation} 
    Indeed, by \cref{eq:Q_n-limit-set} this immediately shows that $\mu(E)=1$, and it follows from \cref{lemma:dim-along-approx-squares,thm:symbolic-variational-principle} by using the fact that for any $(\mtt{i},\mtt{j})\in\Delta_n$, the set $Q_n(\mtt{i},\mtt{j})$ is a symbolic ball of radius $2^{-n}$,
    that
    \begin{equation*}
        \dimh\mu\geq \dimh\nu\geq \dimh \Sigma-\varepsilon\geq \dimh X-\varepsilon,
    \end{equation*}
    where the last inequality is a consequence of the fact that $\pi\colon \Sigma\to X$ is Lipschitz.
    
    It follows from the OSC for the coordinate IFSs, that $X\cap(f_{\mtt{i}}(\overline{U})\times g_{\mtt{j}}(\overline{V}))= \pi(Q_n(\mtt{i},\mtt{j}))$. This shows that
    \begin{equation*}
        \mu(f_{\mtt{i}}(\overline{U})\times g_{\mtt{j}}(\overline{V}))= \nu(Q_n(\mtt{i},\mtt{j})),
    \end{equation*}
    for all $n\in\N$ and $(\mtt{i},\mtt{j})\in\Delta_n$. Note that the sets $f_{\mtt{i}}(\overline{U})\setminus f_{\mtt{i}}(U)=f_{\mtt{i}}(\overline{U}\setminus U)$ and $g_{\mtt{j}}(\overline{V})\setminus g_{\mtt{j}}(V)=g_{\mtt{j}}(\overline{V}\setminus V)$ are countable, and thus the set $f_{\mtt{i}}(\overline{U})\times g_{\mtt{j}}(\overline{V})\setminus f_{\mtt{i}}(U)\times g_{\mtt{j}}(V)$ is contained in a countable collection of vertical and horizontal lines in $\R^2$. Therefore, in order to prove \cref{eq:symbolic-measure-eq}, it suffices to show that $\mu(V)=0$, for any vertical or horizontal line $V\subset \R^2$. This, however, is immediately clear, since $\mu$ is the average of $k$-step Bernoulli measures for $k=1,\ldots,n$, so its projections to the coordinate axes are averages of ($k$-step) self-conformal measures, which are clearly non-atomic if and only if the non-linear IFS has at least two non-empty rows and columns. The second claim of the proposition follows from \cref{prop:sigma-dim-limit}.
\end{proof}

\subsection{Modified lower dimension of non-linear carpets}
Next we adapt the methods in the previous sections to prove \cref{thm:hausdorff-equals-ml}. For any $n\in\N$, we denote by
\begin{equation*}
    t_n=\max_{\mb{p}\in\mathcal{P}(\Lambda^n)}g(\mb{p},\mb{a}_n,\mb{b}_n),
\end{equation*}
where $\mb{a}_n$ and $\mb{b}_n$ are given by \cref{eq:dist}. We start with two simple lemmas.
\begin{lemma}\label{lemma:sup-gap}
    Assume that there exist $x,y\in X$ and $(i,j)\in\Lambda$, such that
    \begin{equation*}
        |f_i'(x)|\ne |g_j'(y)|.
    \end{equation*}
    Then for infinitely many $n\in\N$, there exists $(\mtt{i},\mtt{j})\in\Lambda^n$, such that
    \begin{equation*}
        \|f_{\mtt{i}}'\|\ne\|g_{\mtt{j}}'\|.
    \end{equation*}
\end{lemma}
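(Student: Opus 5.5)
We argue by contradiction. Suppose that for all sufficiently large $n$, say $n\geq N$, we have $\|f_{\mtt{i}}'\|=\|g_{\mtt{j}}'\|$ for every $(\mtt{i},\mtt{j})\in\Lambda^n$. From this we want to deduce that $|f_i'(x)|=|g_j'(y)|$ for all $x,y\in X$ and all $(i,j)\in\Lambda$, contradicting the hypothesis. Note that this also settles the case of finitely many exceptional $n$, since the assumed identity holds for all $n$ beyond the last exception.

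The key idea is to take a long word $(\mtt{i},\mtt{j})\in\Lambda^n$ and prepend a single letter $(i,j)$. The chain rule gives $f_{i\mtt{i}}'(x)=f_i'(f_{\mtt{i}}(x))\,f_{\mtt{i}}'(x)$. As $n\to\infty$ the points $f_{\mtt{i}}(x)$ range over small neighbourhoods in $X_1$. Using bounded distortion in its Hölder form, i.e. $\log|f_{\mtt{i}}'|$ has uniformly bounded oscillation, one gets
\[
\|f_{i\mtt{i}}'\|\approx |f_i'(f_{\mtt{i}}(x_0))|\,\|f_{\mtt{i}}'\|,
\]
with a multiplicative error tending to $1$ as $n\to\infty$. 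The error is controlled by the Hölder continuity of $\log|f_i'|$ on the small interval $f_{\mtt{i}}([0,1])$, whose diameter is at most $\lambda_2^n$.

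Combining $\|f_{i\mtt{i}}'\|=\|g_{j\mtt{j}}'\|$ with $\|f_{\mtt{i}}'\|=\|g_{\mtt{j}}'\|$ then yields
\[
|f_i'(u_n)|=|g_j'(v_n)|\,(1+o(1)),
\]
where $u_n\in f_{\mtt{i}}(X_1)$ and $v_n\in g_{\mtt{j}}(X_2)$. Now fix $x\in X$ and $y\in X$ arbitrary; here we treat $x$ as a point of $X_1$ and $y$ as a point of $X_2$ via projections, with the points meaning the relevant coordinates. The claim is that we can choose the words so that $f_{\mtt{i}}(X_1)$ shrinks to $x$ and $g_{\mtt{j}}(X_2)$ shrinks to $y$. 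The subtle point is that $(\mtt{i},\mtt{j})$ must be a word in $\Lambda^n$, so the first and second coordinates are coupled.

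To handle the coupling, we use the fact that $x$ and $y$ are coordinates of points of $X$. Write $x$ as the first coordinate of $\pi(\mtt{i}_x,\mtt{j}_x)$ and $y$ as the second coordinate of $\pi(\mtt{i}_y,\mtt{j}_y)$. Then we take the word whose first coordinate is $\mtt{i}_x|_n$ but whose second coordinate must be $\mtt{j}_x|_n$. This is the main obstacle: we cannot independently approach $x$ in the first coordinate and $y$ in the second.

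The fix is to derive relations one coordinate at a time. From the relation above with both coordinates taken along a single point $\pi(\mtt{k})$, $\mtt{k}\in\Sigma$, we get $|f_i'(x_1)|=|g_j'(y_1)|$ for every $(x_1,y_1)\in X$ and every $(i,j)\in\Lambda$. This uses continuity of $f_i'$ and $g_j'$, and the fact that $\pi(\mtt{k}|_n\,\cdot)$ shrinks to $\pi(\mtt{k})$.

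To get the statement for an arbitrary pair $x,y\in X$, that is, $x=(x_1,x_2)$ and $y=(y_1,y_2)$ with $|f_i'(x_1)|$ compared to $|g_j'(y_2)|$, we apply the pointwise identity at the intermediate point $(x_1,x_2)$:
\[
|f_i'(x_1)|=|g_j'(x_2)|,\qquad |f_i'(y_1)|=|g_j'(y_2)|.
\]
This alone does not immediately link $x_1$ to $y_2$. However, the interpretation of the hypothesis in the paper is surely that $|f_i'(x)|\neq|g_j'(y)|$ for $x$, $y$ in the respective coordinate attractors, read as the coordinates of points of $X$. For the case where the hypothesis is stated with a single point $x=y$, the argument above already suffices.

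If genuinely independent points are needed, we first deduce that $|f_i'|$ is constant on $\pi_1(X)$ restricted to the fibres of each $y$-value. To do this, we vary the tail of the word while keeping the same prefix and compare the two resulting identities, which should force $|f_i'(x_1)|$ to be constant over $X_1$. I expect this decoupling to be the delicate step, and I would first check whether the intended reading of the hypothesis makes it unnecessary.
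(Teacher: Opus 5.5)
Your argument is correct when $(x,y)$ is read as a single point of $X$, and it is essentially the paper's proof. Both prepend $(i,j)$ to the length-$n$ prefix $(\mtt{i},\mtt{j})$ of a coding of $(x,y)$ and compare levels $n$ and $n+1$ through the chain-rule sandwich $\min_{f_{\mtt{i}}([0,1])}|f_i'|\,\|f_{\mtt{i}}'\|\leq\|f_{i\mtt{i}}'\|\leq\max_{f_{\mtt{i}}([0,1])}|f_i'|\,\|f_{\mtt{i}}'\|$, for which continuity of $f_i'$ is enough and bounded distortion is not needed. The only difference is that the paper argues directly (if $\|f_{\mtt{i}}'\|=\|g_{\mtt{j}}'\|$, evaluating at maximisers gives $\|f_{i\mtt{i}}'\|<\|g_{j\mtt{j}}'\|$, so level $n$ or $n+1$ works for every large $n$), while you argue by contradiction and pass to the limit.

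You should drop the decoupling paragraph. The paper's proof also uses $(x,y)\in X$ when it chooses $(\mtt{i},\mtt{j})\in\Lambda^n$ close to both coordinates. Moreover, the decoupled statement is false: take $\Lambda=\{(1,1),(2,2)\}$, $g_1=f_1$, $g_2=f_2$ with $|f_1'|$ non-constant on $X_1$; then $\|f_{\mtt{i}}'\|=\|g_{\mtt{j}}'\|$ for all $(\mtt{i},\mtt{j})\in\Lambda^n$ and all $n$. So the tail-varying argument you sketch cannot succeed.
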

\begin{proof}
    Assume without loss of generality that
    \begin{equation*}
        |f_i'(x)|< |g_j'(y)|.
    \end{equation*}
    Since $f_i'$ and $g_j'$ are continuous, for all large enough $n\in\N$, there is $(\mtt{i},\mtt{j})\in\Lambda^n$, such that for all $(x,y)\in[0,1]^2$, we have
    \begin{equation*}
        |f_i'(f_{\mtt{i}}(x))|< |g_j'(g_{\mtt{j}}(y))|.
    \end{equation*}
    We may assume that $\|f_{\mtt{i}}'\|=\|g_{\mtt{j}}'\|$; otherwise we are done. Then for any $x'\in[0,1]$ there exists $y'\in[0,1]$, such that $|f'_{\mtt{i}}(x')|\leq |g'_{\mtt{j}}(y')|$. Let $x'\in[0,1]$ be a point which satisfies $|f_{i\mtt{i}}'(x')|=\|f_{i\mtt{i}}'\|$, which exists by compactness. Then
    \begin{equation*}
        \|f_{i\mtt{i}}'\|=|f_{i\mtt{i}}'(x')|=|f_i'(f_{\mtt{i}}(x'))\|f_{\mtt{i}}'(x'))|< |g_j'(g_{\mtt{j}}(y'))\|g'_{\mtt{j}}(y')|=|g_{j\mtt{j}}'(y')|\leq \|g_{j\mtt{j}}'\|,
    \end{equation*}
    which gives the claim.
\end{proof}
The next lemma follows immediately from the previous one and \cref{prop:distinct-lyapunov}.
\begin{lemma}\label{lemma:lyapunov-exponent-gap-non-linear}
    For any $\varepsilon>0$ and for infinitely many $n\in\N$, there exists $\mb{p}\in\mathcal{P}(\Lambda^n)$, such that
    \begin{equation*}
        \lambda_1(\mb{p},\mb{a}_n)\ne\lambda_2(\mb{p},\mb{b}_n),
    \end{equation*}
    and
    \begin{equation*}
        g(\mb{p},\mb{a}_n,\mb{b}_n)\geq t_n-\varepsilon.
    \end{equation*}
\end{lemma}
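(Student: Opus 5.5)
The plan is to combine \cref{lemma:sup-gap} with \cref{prop:distinct-lyapunov}, applied to the symbolic Bara\'nski carpets $(\Sigma_n,d_n)$. Throughout we assume, as in \cref{lemma:sup-gap}, that there are $x,y\in X$ and $(i,j)\in\Lambda$ with $|f_i'(x)|\ne|g_j'(y)|$.

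This hypothesis is implicit in the statement. If it fails, then $|f_i'|$ and $|g_j'|$ are constant and all equal on the relevant points for every $(i,j)\in\Lambda$. In that case $\mb{a}_n$ and $\mb{b}_n$ agree on every allowed pair of words, so the two Lyapunov exponents coincide for every $\mb{p}$. The conclusion then cannot hold and is not needed, since the carpet is conformal (self-similar up to distortion) in that situation. I would state this reduction explicitly at the start.

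First I invoke \cref{lemma:sup-gap}. It gives an infinite set $N\subset\N$ such that for every $n\in N$ there is $(\mtt{i},\mtt{j})\in\Lambda^n$ with $a_{\mtt{i}}=\|f_{\mtt{i}}'\|\ne\|g_{\mtt{j}}'\|=b_{\mtt{j}}$. One point needs checking here: $(\mtt{i},\mtt{j})$ should be an admissible letter of the alphabet $\Lambda^n$, i.e.\ each coordinate pair $(i_k,j_k)$ lies in $\Lambda$. In the proof of \cref{lemma:sup-gap}, the pair $(\mtt{i},\mtt{j})$ can be taken in $\Lambda^{n}$ (the continuity argument only needs some admissible word pair whose images are close to $x$ and $y$), and prepending $(i,j)\in\Lambda$ keeps it admissible. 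So the output pair $(i\mtt{i},j\mtt{j})$ indeed lies in $\Lambda^{n+1}$, and the hypothesis of \cref{prop:distinct-lyapunov} holds for the alphabet $\Lambda^{n+1}$.

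Next, fix $\varepsilon>0$ and $n\in N$. By \cref{prop:baranski-dim}, $\dimh(\Sigma_n,d_n)=\max_{\mb{p}\in\mathcal{P}(\Lambda^n)}g(\mb{p},\mb{a}_n,\mb{b}_n)=t_n$. Apply \cref{prop:distinct-lyapunov} to $(\Sigma_n,d[\mb{a}_n,\mb{b}_n])$ with $t=t_n-\varepsilon<\dimh\Sigma_n$. This yields $\mb{p}\in\mathcal{P}^\circ(\Lambda^n)$ with $\lambda_1(\mb{p},\mb{a}_n)\ne\lambda_2(\mb{p},\mb{b}_n)$. By \cref{prop:baranski-measure-dim} it also satisfies $g(\mb{p},\mb{a}_n,\mb{b}_n)=\dimh\nu_{\mb{p}}>t_n-\varepsilon$. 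Since $N$ is infinite, this proves the lemma.

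The only step requiring care is the admissibility bookkeeping in the first step. Everything else is a direct citation, so I expect no real obstacle.
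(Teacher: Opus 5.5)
Your proposal is correct and is exactly the paper's argument: the paper also obtains this lemma immediately by feeding the gap $a_{\mtt{i}}\ne b_{\mtt{j}}$ at infinitely many levels from \cref{lemma:sup-gap} into \cref{prop:distinct-lyapunov} for $(\Sigma_n,d_n)$, with $t=t_n-\varepsilon$ and $t_n=\dimh\Sigma_n$ by \cref{prop:baranski-dim}, implicitly relying on the non-conformality hypothesis of \cref{lemma:sup-gap} that you rightly make explicit. One small quibble: your aside that the conclusion ``cannot hold'' when that hypothesis fails is not justified, since $\mb{a}_n,\mb{b}_n$ are sup norms over $[0,1]$ rather than over the attractor, so they need not coincide; this aside is not used in the argument.
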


We are now ready to prove \cref{thm:hausdorff-equals-ml}.
\begin{proof}[Proof of \cref{thm:hausdorff-equals-ml}]
Let $t<\dimh X$ and $\varepsilon>0$ be small enough that $t(1+\varepsilon)<\dimh X$. Recall that  $\lim_{n\to\infty}t_n=\dimh X$ by \cref{thm:baranski-like-dimh}, so we may choose $n\in\N$ large enough, such that $t(1+\varepsilon)<t_n<\dimh X$. By taking $n\in\N$ larger if needed, and applying \cref{lemma:lyapunov-exponent-gap-non-linear}, we find $\mb{p}\in\mathcal{P}(\Lambda^n)$, such that
\begin{equation*}
    g(\mb{p},\mb{a}_n,\mb{b}_n)> t(1+\varepsilon),
\end{equation*}
and we may assume without loss of generality that $\lambda_1(\mb{p},\mb{a}_n)> \lambda_2(\mb{p},\mb{b}_n)$.

Now for any $k\in\N$, consider the collection
\begin{equation*}
    \Gamma_{n,k}=\{(\omtt{i},\omtt{j})\in\Lambda^{nk}\colon n_{\mtt{i},\mtt{j}}(\omtt{i},\omtt{j})=\lceil kp_{\mtt{i},\mtt{j}}\rceil,\,\forall (\mtt{i},\mtt{j})\in\Lambda^n\},
\end{equation*}
where $n_{\mtt{i},\mtt{j}}(\omtt{i},\omtt{j})\coloneqq\#\{0\leq\ell\leq k-1\colon\sigma^{n\ell}(\omtt{i},\omtt{j})|_{n}=(\mtt{i},\mtt{j})\}$ denotes the number of times the symbol $(\mtt{i},\mtt{j})$ appears in the word $(\omtt{i},\omtt{j})$. Since for some $\delta_n>0$,  $\lambda_1(\mb{p},\mb{a}_n)\geq \lambda_2(\mb{p},\mb{b}_n)+\delta_n$, we have
\begin{align}\label{eq:distinct-lyapunov-iter}
    \sum_{(\mtt{i},\mtt{j})\in\Lambda^n}\lceil kp_{\mtt{i},\mtt{j}}\rceil\log a_{\mtt{i}}&\geq \sum_{(\mtt{i},\mtt{j})\in\Lambda^n}kp_{\mtt{i},\mtt{j}}\log a_{\mtt{i}}+\sum_{(\mtt{i},\mtt{j})\in\Lambda^n}\log a_{\mtt{i}}\\
    &\geq \sum_{(\mtt{i},\mtt{j})\in\Lambda^n}kp_{\mtt{i},\mtt{j}}\log b_{\mtt{j}}+k\delta_n\nonumber+\sum_{(\mtt{i},\mtt{j})\in\Lambda^n}\log a_{\mtt{i}}\\
    &\geq \sum_{(\mtt{i},\mtt{j})\in\Lambda^n}\lceil kp_{\mtt{i},\mtt{j}}\rceil\log b_{\mtt{j}}+k\delta_n\nonumber+\sum_{(\mtt{i},\mtt{j})\in\Lambda^n}\log a_{\mtt{i}}\nonumber\\
    &>\sum_{(\mtt{i},\mtt{j})\in\Lambda^n}\lceil kp_{\mtt{i},\mtt{j}}\rceil\log b_{\mtt{j}}-\log c,\nonumber
\end{align}
 whenever $k>\delta_n^{-1}\left(-\log c-\sum_{(\mtt{i},\mtt{j})\in\Lambda^n}\log a_{\mtt{i}}\right)$, where $0<c\leq1$ is the uniform bounded distortion constant for the coordinate IFSs. Moreover, since the number of times each $(\mtt{i},\mtt{j})\in\Lambda^n$ appears in words of $\Gamma_{n,k}$ is constant, we have that
\begin{equation*}
    a_{\mtt{i}_1\mtt{i}_2\cdots \mtt{i}_k}=a_{\mtt{i}_1'\mtt{i}_2'\cdots \mtt{i}_k'}\eqqcolon a_{n,k}\text{ and }b_{\mtt{j}_1\mtt{j}_2\cdots \mtt{j}_k}=b_{\mtt{j}_1'\mtt{j}_2'\cdots \mtt{j}_k'}\eqqcolon b_{n,k},
\end{equation*}
for all $(\mtt{i}_1\mtt{i}_2\cdots \mtt{i}_k,\mtt{j}_1\mtt{j}_2\cdots \mtt{j}_k),(\mtt{i}_1'\mtt{i}_2'\cdots \mtt{i}_k',\mtt{j}_1'\mtt{j}_2'\cdots \mtt{j}_k')\in\Gamma_{n,k}$, and it follows from \cref{eq:distinct-lyapunov-iter}, that
\begin{equation}\label{eq:domination-gap}
    b_{n,k}<ca_{n,k}.
\end{equation}
Note that by initially taking $n$ large enough, we may also assume that
\begin{equation*}
    (\max_{i\in \Lambda_1}a_i)^{\varepsilon n}\leq c\text{ and }(\max_{j\in \Lambda_2}b_j)^{\varepsilon n}\leq c,
\end{equation*}
and thus for every large enough $k\in\N$, each $(\mtt{i}_1\mtt{i}_2\cdots\mtt{i}_k,\mtt{j}_1\mtt{j}_2\cdots\mtt{j}_k)\in\Gamma_{n,k}$, and every $x\in X$, we have by the chain rule and bounded distortion, that
\begin{equation}\label{eq:S-lb}
    |f_{\mtt{i}_1\mtt{i}_2\cdots\mtt{i}_k}'(x)|=\prod_{j=1}^k|f_{\mtt{i}_j}'(f_{\mtt{i}_{j+1}\cdots \mtt{i}_k}(x))|\geq \prod_{j=1}^kca_{\mtt{i}_j}\geq (a_{\mtt{i}_1\mtt{i}_2\cdots \mtt{i}_k})^{1+\varepsilon},
\end{equation}
where we interpret $f_{\mtt{i}_{j+1}\cdots \mtt{i}_k}(x)=x$, and similarly
\begin{equation}\label{eq:T-lb}
    |g_{\mtt{j}_1\mtt{j}_2\cdots\mtt{j}_k}'(x)|\geq (b_{\mtt{j}_1\mtt{j}_2\cdots \mtt{j}_k})^{1+\varepsilon}.
\end{equation}

Let us now denote by $\Tilde{\Gamma}_{n,k}$ the projections of the strings in $\Gamma_{n,k}$ onto their first coordinates, and let us set
\begin{equation}\label{eq:s_nk}
    s_{n,k}=\frac{\log \#\Tilde{\Gamma}_{n,k}}{-\log a_{n,k}}+\frac{\log \#\Gamma_{n,k}-\log \#\Tilde{\Gamma}_{n,k}}{-\log b_{n,k}}
\end{equation}
Note that  $\#\Tilde{\Gamma}_{n,k}$ corresponds to the number of non-empty columns in the subsystem defined by the alphabet $\Gamma_{n,k}$, and since, by construction, each non-empty column has an equal number of maps, the number of maps in each non-empty column is $\#\Gamma_{n,k}/\#\Tilde{\Gamma}_{n,k}$. Using Stirling approximation precisely as in \cite[Lemma 4.3]{Ferguson2010} shows that $s_{n,k}\to g(\mb{p},\mb{a}_n,\mb{b}_n)$ as $k\to\infty$, and therefore, by choosing $k$ large enough, we have
\begin{equation*}
    \frac{s_{n,k}}{1+\varepsilon}>t.
\end{equation*}

Let us now denote by $\Sigma_{n,k}=\Sigma(\Gamma_{n,k})$ the symbolic space associated with the alphabet $\Gamma_{n,k}$ and let $X_{n,k}=\pi(\Sigma_{n,k})$. To clarify the notation slightly, going forward, the notation without overlines $(\mtt{i},\mtt{j})$ will refer to the letters in the alphabet $\Gamma_{n,k}$, and we use overlines $(\omtt{i},\omtt{j})$ to refer to the words in $\Gamma_{n,k}^*$ and $\Sigma_{n,k}$. The following lemma is the main geometric argument in the proof.
\begin{lemma}
    For any $T\in\Tan(X_{n,k})$, we have
    \begin{equation*}
        \dimh T\geq t.
    \end{equation*}
\end{lemma}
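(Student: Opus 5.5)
Throughout, write $F_{\omtt{i}}=f_{\mtt{i}_1\cdots\mtt{i}_m}$ and $G_{\omtt{j}}=g_{\mtt{j}_1\cdots\mtt{j}_m}$ for a word $(\omtt{i},\omtt{j})\in\Gamma_{n,k}^m$. The plan is to adapt the argument of Ferguson--Jordan--Shmerkin for dominated self-affine carpets. The subsystem $\Gamma_{n,k}$ is, up to controlled distortion, a Gatzouras--Lalley type carpet with the vertical direction strongly contracted, in which every column contains the same number of maps. I will show directly that every ball $B(z,R)\cap X_{n,k}$ with $z\in X_{n,k}$, $0<R\leq 1$, supports a mass distribution $m$ (the pushforward of the uniform Bernoulli measure on $\Gamma_{n,k}$ restricted to a suitable cylinder) satisfying a uniform Frostman bound
\[
m(B(w,r))\leq C'(r/R)^{t}\,m(\text{total}),\qquad r<R,
\]
with $C'$ independent of $z,R$. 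These bounds pass to weak tangents: take weak limits of the rescaled measures, and the mass distribution principle then gives $\dimh T\geq t$.

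\textbf{Step 1: geometry of cylinders.} For $(\omtt{i},\omtt{j})\in\Gamma_{n,k}^m$, the chain rule together with \eqref{eq:S-lb}, \eqref{eq:T-lb} and the trivial upper bound shows that the cylinder $S_{\omtt{i},\omtt{j}}(X_{n,k})$ lies in a rectangle of width between $c\,a_{n,k}^{m(1+\varepsilon)}$ and $a_{n,k}^m$, and of height between $c\,b_{n,k}^{m(1+\varepsilon)}$ and $b_{n,k}^m$. Iterating \eqref{eq:domination-gap} gives $b_{n,k}^m< c^m a_{n,k}^m$. I would like this to exceed the distortion factor, so that cylinders remain genuinely wide and flat: the height should be at most the width times something small. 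This is exactly why $-\log c$ was built into the gap in \eqref{eq:distinct-lyapunov-iter}. Note that the distortion here is only the bounded factor $c$ (not $c^m$) once we use $\|F_{\omtt{i}}'\|$-type comparisons via \cref{lemma:bounded-distortion}. Combined with the $\varepsilon$-slack, this should give consistently oriented rectangles at every level.

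\textbf{Step 2: approximate squares.} Given $z$ and $R$, choose $m$ with width $\approx R$, and $\ell\leq m$ with height $\approx R$. The approximate square is the union of level-$\ell$ cylinders sharing the first $m$ column symbols. By the coordinate OSC these sets form a filtration comparable to balls, as in the proof of \cref{thm:baranski-like-dimh}. Under the uniform measure, the level-$m$ column structure is uniform: every column has $\#\Gamma_{n,k}/\#\Tilde{\Gamma}_{n,k}$ maps.

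\textbf{Step 3: counting.} Inside the approximate square of size $R$, count the sub-approximate-squares of size $r$. Their number is $\#\Tilde{\Gamma}_{n,k}^{\,m'-m}(\#\Gamma_{n,k}/\#\Tilde{\Gamma}_{n,k})^{\ell'-\ell}$ up to bounded factors. The exponents are estimated through $a_{n,k}^{(1+\varepsilon)}$ and $b_{n,k}^{(1+\varepsilon)}$, giving at least $(R/r)^{s_{n,k}/(1+\varepsilon)}\geq (R/r)^t$. Equidistribution of the uniform measure then gives the Frostman bound.

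The main obstacle is Step 1/2: the distortion is only Hölder, yet the ordering of the cylinders' sides must hold at \emph{every} scale and location uniformly, and the $(1+\varepsilon)$ errors must not accumulate across the two different levels $m,\ell$. I would handle this by estimating $\log$ sizes via $|F'|\in[a^{m(1+\varepsilon)},a^m]$ throughout, and absorbing the losses into the single factor $1+\varepsilon$.
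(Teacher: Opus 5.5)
Your architecture matches the paper's. The paper's measure $\nu_m$ on $D_m$ is exactly the normalised restriction of the uniform Bernoulli measure on $\Sigma(\Gamma_{n,k})$ to the approximate square $Q_m$. It must be the approximate square, a union of $(\#\Gamma_{n,k}/\#\Tilde{\Gamma}_{n,k})^{k_m-\ell_m}$ level-$k_m$ cylinders, and not a single cylinder as your overview says. A single cylinder of width $\approx R$ is much flatter than $R$, and at scales between its height and $R$ it only carries dimension $\log\#\Tilde\Gamma_{n,k}/(-\log a_{n,k})$. The Frostman bound then comes from counting sub-approximate squares, and one passes to $T$ by weak-$*$ limits. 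The genuine gap is Step 1, on which Steps 2--3 rest. Those steps need $\ell\le m$ at every centre and scale (the paper's $\ell_m\le k_m$), so that each approximate square is a union of level-$m$ cylinders with the same horizontal word and your count $\#\Tilde\Gamma_{n,k}^{\,m'-m}(\#\Gamma_{n,k}/\#\Tilde\Gamma_{n,k})^{\ell'-\ell}$ is valid. This amounts to $\|g'_{\omtt j}\|\le\|f'_{\omtt i}\|$ for all $(\omtt i,\omtt j)\in\Gamma_{n,k}^m$ and all $m$, and your argument does not establish it.

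Your claim that the distortion is ``only the bounded factor $c$'' is false for the comparison you need.
\begin{itemize}
\item Only one factor $c$ is lost between a cylinder's width and $\|f'_{\omtt i}\|$. However, $a_{n,k}$ is a product of norms of $n$-blocks, so bounded distortion only gives $\|f'_{\omtt i}\|\ge c^{km-1}a_{n,k}^m$, one factor $c$ per $n$-block. This loss can be genuinely exponential in $km$ for non-linear $f_i$.
\item Iterating \eqref{eq:domination-gap} gives only $\|g'_{\omtt j}\|\le b_{n,k}^m<c^m a_{n,k}^m$, which does not beat $c^{km}$.
\item The $\varepsilon$-slack cannot rescue this. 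The choice of $n$ gives $a_{n,k}^{\varepsilon}\le c^{k}$, so your intervals $[a_{n,k}^{m(1+\varepsilon)},a_{n,k}^m]$ and $[b_{n,k}^{m(1+\varepsilon)},b_{n,k}^m]$ overlap whenever $a_{n,k}^{1+\varepsilon}<b_{n,k}<c\,a_{n,k}$. That range is non-empty and is not excluded by the construction.
\end{itemize}
Where the orientation flips, approximate squares become unions of row-pieces rather than column-pieces, and the normalised masses pick up factors that your Step 3 count does not control.

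What is needed is a domination margin that beats the per-block loss, for example $b_{n,k}\le c^{k+1}a_{n,k}$, which gives $\|g'_{\omtt j}\|\le c^{m+1}\|f'_{\omtt i}\|$ everywhere. By the computation in \eqref{eq:distinct-lyapunov-iter}, this holds for large $k$ provided $\delta_n>-\log c$. That is a quantitative strengthening of \cref{lemma:lyapunov-exponent-gap-non-linear}, which only provides $\delta_n>0$. Alternatively, you would need a Frostman argument that tolerates changes of orientation. Be aware that the paper's own version of this step, \eqref{eq:k>l}, asserts $c^{\ell}a_{n,k}^{\ell}\le\|f'_{\omtt i}\|$ and runs into the same per-block loss, so citing it does not close the gap.
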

\begin{proof}
    Let $T\in\Tan(X_{n,k})$ and let $x_m\in X_{n,k}$, $r_m>0$ and
    \begin{equation*}
        \frac{X_{n,k}\cap B(x_m,r_m)-x_m}{r_m}\to T,
    \end{equation*}
    in the Hausdorff distance. Denote by $M_{x_m,r_m}\colon \R^2\to\R^2$ the map
    \begin{equation*}
        M_{x_m,r_m}(y)=\frac{y-x_m}{r_m}.
    \end{equation*}
     For each $m\in\N$, let $(\omtt{i}_m,\omtt{j}_m)\in\Sigma_{n,k}$ be such that $\pi(\omtt{i}_m,\omtt{j}_m)=x_m$, and choose $k_{m},\ell_{m}\in\N$, to be the first natural numbers which satisfy
    \begin{equation}\label{eq:r_m-bound}
        \|f_{\omtt{i}_m|_{k_m}}'\|\leq r_m,\text{ and }\|g_{\omtt{j}_m|_{\ell_m}}'\|\leq r_m.
    \end{equation}
    Let us denote by
    \begin{equation*}
        Q_m=\{(\omtt{i},\omtt{j})\in \Sigma(\Gamma_{n,k})\colon \omtt{i}|_{k_m}=\omtt{i}_m|_{k_m}\text{ and }\omtt{j}|_{\ell_m}=\omtt{j}_m|_{\ell_m} \},
    \end{equation*}
    the symbolic approximate square of radius $r_m$ centered at $(\omtt{i},\omtt{j})$, and by
    \begin{equation}\label{eq:geom-approx-square}
        D_m=\pi(Q_m),
    \end{equation}
    the geometric one. It follows from \cref{eq:domination-gap} and \cref{lemma:bounded-distortion}, that for any $(\omtt{i},\omtt{j})\in\Gamma_{n,k}^{\ell}$,
    \begin{equation}\label{eq:k>l}
        \|g_{\omtt{j}}'\|\leq (b_{n,k})^{\ell}< c^{\ell}(a_{n,k})^{\ell}\leq \|f_{\omtt{i}}'\|,
    \end{equation}
    so in particular $k_m\geq \ell_m$ for all $m\in\N$. Also \cref{eq:r_m-bound} implies that $\diam(D_m)\leq r_m$, and since $x_m\in D_m$, we have $D_m\subset B(x_m,r_m)$. In particular, there is $F\subset T$, such that, after passing to a subsequence if needed,
    \begin{equation*}
        F_m\coloneqq\frac{D_m-x_m}{r_m}\to F,
    \end{equation*}
    in the Hausdorff distance.

    We now define a measure $\nu_m$ on $D_m$ in the following way. Start by noting that since $k_m\geq \ell_m$,
    \begin{equation*}
        Q_m=\bigcup_{\omtt{j}'\in \Delta_m(\omtt{i}_m,\omtt{j}_m)}[\omtt{i}_m|_{k_m}]\times [\omtt{j}_m|_{\ell_m}\mtt{j}'],
    \end{equation*}
    where
    \begin{equation*}
        \Delta_m(\omtt{i}_m,\omtt{j}_m)=\{\omtt{j}'\in\Lambda_2^{(k_m-\ell_m)nk}\colon (\omtt{i}_m|_{k_m},\omtt{j}_m|_{\ell_m}\omtt{j}')\in \Gamma_{n,k}^{k_{m}}\}.
    \end{equation*}
    Let us set for each $\omtt{j}'\in \Delta_m(\omtt{i}_m,\omtt{j}_m)$
    \begin{equation*}
        \nu_m(S_{\omtt{i}_m|_{k_m},\omtt{j}_m|_{\ell_m}\mtt{j}'}(X_{n,k}))=\left(\frac{\#\Tilde{\Gamma}_{n,k}}{\#\Gamma_{n,k}}\right)^{k_m-\ell_m}.
    \end{equation*}
    We note that since $\omtt{i}_m|_{k_m}=\omtt{i}_m|_{\ell_m}\omtt{i}'$, for some unique word $\omtt{i}'\in\Tilde{\Gamma}_{n,k}^{k_m-\ell_m}$, and since for each $\mtt{i}\in\Tilde{\Gamma}_{n,k}$, the number of elements $(\mtt{i},\mtt{j})$ that are in $\Gamma_{n,k}$ is $\#\Gamma_{n,k}/\#\Tilde{\Gamma}_{n,k}$, we have
    \begin{equation*}
        \#\Delta_m(\omtt{i}_m,\omtt{j}_m)=\left(\frac{\#\Gamma_{n,k}}{\#\Tilde{\Gamma}_{n,k}}\right)^{k_m-\ell_m},
    \end{equation*}
    so the total mass given to $D_m$ in this first stage is $1$. We then uniformly divide mass among the children of each cylinder in an inductive way, by setting for each $(\mtt{i},\mtt{j})\in\Gamma_{n,k}$,
    \begin{equation*}
        \nu_m(S_{\omtt{i}\mtt{i},\omtt{j}\mtt{j}}(X_{n,k}))=\frac{1}{\#\Gamma_{n,k}}\nu_m(S_{\omtt{i},\omtt{j}}(X_{n,k})),
    \end{equation*}
    whenever $\omtt{i}=\omtt{i}_m|_{k_m}\mtt{i}_1\cdots\mtt{i}_p$ and $\omtt{j}=\omtt{j}_m|_{\ell_m}\mtt{j}'\omtt{j}_1\cdots\mtt{j}_p$, with $\omtt{j}'\in\Delta_m(\omtt{i}_m,\omtt{j}_m)$ and $(\mtt{i}_j,\mtt{j}_j)\in\Gamma_{n,k}$ for all $j=1,\ldots,p$. By setting
    \begin{equation*}
        \mu_m=\nu_m\circ M_{x_m,r_m}^{-1},
    \end{equation*}
    the measure $\mu_m$ is supported on $F_m$. We claim that there are constants $c>0$ and $r_0>0$, independent of $m$, such that
    \begin{equation}\label{eq:unif-frostman}
        \mu_m(B(x,r))\leq cr^t,
    \end{equation}
    for all $x\in B(0,1)$ and $0<r<r_0$. To see that this gives the claim, let $\mu$ be a weak-$\ast$ limit point of the sequence $(\mu_m)_m$, and note that since the support of $\mu_m$ is $F_m$, the support of $\mu$ is contained in $F$, and moreover, for each $x\in Q$ and $r<r_0$, we have by the Portmanteau theorem that
    \begin{equation*}
        \mu(B(x,r))\leq \liminf_{m\to\infty}\mu_m(B(x,r))\leq cr^t.
    \end{equation*}
    Therefore $\dimh T\geq\dimh F\geq \dimh \mu\geq t$.

    It remains to prove \cref{eq:unif-frostman}. For this, let $x\in B(0,1)$, and $0<r<\diam(Q)$, and note that $M_{x_m,r_m}^{-1}(B(x,r))=B(x_m+r_mx,r_mr)$. Denote by $y=x_m+r_mx$ and $r'=r_mr$. Let us set
    \begin{equation*}
        \Delta(r')=\{(\omtt{i},\omtt{j})\in\Gamma_{n,k}^*\colon \|f_{\omtt{i}}'\|< r'\leq \|f_{\omtt{i}^-}'\|,\text{ and }\|g_{\omtt{j}}'\| < r'\leq \|g_{\omtt{j}^-}'\|\},
    \end{equation*}
    and further,
    \begin{align*}
        &\Delta(y,r')=\{(\omtt{i},\omtt{j})\in\Delta(r')\colon \pi(\mathcal{Q}(\omtt{i},\omtt{j}))\cap B(y,r')\ne\emptyset \},
    \end{align*}
    where $Q(\omtt{i},\omtt{j})=\{(\omtt{i}',\omtt{j}')\colon \omtt{i}'|_{|\omtt{i}|}=\omtt{i},\text{ and }\omtt{j}'|_{|\omtt{j}|}=\omtt{j}\}$, denotes the symbolic approximate square associated to $(\omtt{i},\omtt{j})$. Analogously to \cite{Peres2001}, it follows from the fact that the coordinate IFSs satisfy the OSC, that there exists a constant $M_{n,k}<\infty$, independent of $y$ and $r'$, such that $\#\Delta(y,r')\leq M_{n,k}$.
    
    If $(\omtt{i},\omtt{j})\in\Delta(y,r')$, then it follows from \cref{eq:k>l} that $|\mtt{i}|\geq |\mtt{j}|$, and arguing as before, the set $\pi(Q(\omtt{i},\omtt{j}))$ is covered by
    \begin{equation*}
        \left(\frac{\#\Gamma_{n,k}}{\#\Tilde{\Gamma}_{n,k}}\right)^{|\omtt{i}|-|\omtt{j}|},
    \end{equation*}
    cylinders of level $|\omtt{i}|$, and by the definition of the measure $\nu_m$, each of these cylinders has mass
    \begin{equation*}
        \left(\frac{\#\Tilde{\Gamma}_{n,k}}{\#\Gamma_{n,k}}\right)^{k_m-\ell_m}\left(\frac{1}{\#\Gamma_{n,k}}\right)^{|\omtt{i}|-k_m}.
    \end{equation*}
    Therefore,
    \begin{align*}
        \nu_m(\pi(Q(\omtt{i},\omtt{j})))
        &\leq \left(\frac{\#\Gamma_{n,k}}{\#\Tilde{\Gamma}_{n,k}}\right)^{|\omtt{i}|-|\omtt{j}|}\left(\frac{\#\Tilde{\Gamma}_{n,k}}{\#\Gamma_{n,k}}\right)^{k_m-\ell_m}\left(\frac{1}{\#\Gamma_{n,k}}\right)^{|\omtt{i}|-k_m}\\
        &=\left(\frac{1}{\#\Tilde{\Gamma}_{n,k}}\right)^{|\omtt{i}|-k_m}\left(\frac{\#\Tilde{\Gamma}_{n,k}}{\#\Gamma_{n,k}}\right)^{|\omtt{j}|-\ell_m}.
    \end{align*}
    We may write $\omtt{i}=\omtt{i}_m|_{k_m}\omtt{i}'$ and $\omtt{j}=\omtt{j}_m|_{\ell_m}\omtt{j}'$, with $|\omtt{i}'|=|\omtt{i}|-k_m$ and $|\omtt{j}'|=|\omtt{j}|-\ell_m$, and therefore, by the definitions of $\Delta(y,r')$, and using bounded distortion together with the choice of $k_m$ and $\ell_m$, as well as \cref{eq:T-lb,eq:S-lb}, we have
    \begin{equation*}
        r_mr\geq \|f_{\omtt{i}}'\|\gtrsim r_m a_{n,k}^{(1+\varepsilon)(|\omtt{i}|-k_m)},
    \end{equation*}
    and
    \begin{equation*}
        r_mr\geq \|g_{\omtt{j}}'\|\gtrsim r_m b_{n,k}^{(1+\varepsilon)(|\omtt{j}|-\ell_m)},
    \end{equation*}
    with the implicit constants independent of $m$.
    Combining these inequalities with the previous calculation, we have
    \begin{align*}
        \nu_m(\pi(Q(\omtt{i},\omtt{j})))&\lesssim a_{n,k}^{(1+\varepsilon)(|\omtt{i}|-k_m)\frac{\log \#\Tilde{\Gamma}_{n,k}}{-(1+\varepsilon)\log a_{n,k}}}+b_{n,k}^{(1+\varepsilon)(|\omtt{j}|-\ell_m)\frac{\log \#\Gamma_{n,k}-\log \#\Tilde{\Gamma}_{n,k}}{-(1+\varepsilon)\log b_{n,k}}}\\
        &\lesssim r^{\frac{s_{n,k}}{1+\varepsilon}}\leq r^t,
    \end{align*}
    and therefore
    \begin{equation*}
        \mu_m(B(x,r))=\nu_m(B(y,r'))\leq \sum_{(\omtt{i},\omtt{j})\in\Delta(y,r')}\nu_m(\pi(Q(\omtt{i},\omtt{j})))\lesssim M_{n,k}r^t,
    \end{equation*}
    which finishes the proof 
\end{proof}
Recalling that the lower dimension is defined as
\begin{equation*}
    \diml X=\min\{\dimh T\colon T\in\Tan(X)\},
\end{equation*}
the previous lemma shows that $\diml X_{n,k}\geq t$, and therefore $\dimml X\geq t$. Since $t<\dimh X$ was arbitrary, the claim follows.
\end{proof}

\subsection{Badly approximable numbers on non-linear carpets}\label{sec:bad}

The only thing we need to check in order to deduce \cref{thm:bad} straight from the proof of \cref{thm:hausdorff-equals-ml} above is that the sets we used to witness the modified lower dimension in the proof of \cref{thm:hausdorff-equals-ml}  are closed and hyperplane diffuse. The sets we construct are clearly compact, since they are themselves attractors of non-linear carpets, and we argue now that they are also hyperplane diffuse. This is what the additional assumption in the statement of \cref{thm:bad} guarantees.  Indeed, the projection of $X_{n,k}$ onto the first coordinate has a  strictly positive diameter $d_1>0$, which is ensured since $\#\Tilde{\Gamma}_{n,k} >1$ by the assumption that the IFS has a row with at least two maps.  Moreover, all non-empty vertical fibres of $X_{n,k}$ have diameters uniformly bounded below by some $d_2>0$, which is ensured since $\#\Gamma_{n,k}>\#\Tilde{\Gamma}_{n,k}$ and the number of maps used in each column is constant.

Let $R>0$, $x \in X_{n,k}$ and $V$ be an affine hyperplane (a line) in the plane.  Let $D_m$ be the (geometric) approximate square, defined as in \cref{eq:geom-approx-square}, centred at $x$ and of radius $2^{-m}$ where $m$ is chosen uniquely to ensure that 
\[
D_m \subseteq B(x,R) 
\]
and
\[
D_{m-1} \nsubseteq B(x,R).
\]
Then, 
\[
D_m \setminus V_{\beta R} \neq \emptyset,
\]
for $\beta = c\min\{d_1,d_2\}$ for some constant $c>0$ depending only on the original IFS. This completes the proof.

\section{Parabolic Cantor sets}

In this section we observe that the Schmidt game approach can also be used to solve the intersecting with $\Bad_d$ problem for another well-known family of dynamically defined fractals. In \cite{Das2018}, it was observed that the Schmidt game approach works for infinitely generated self-conformal sets, by passing to finitely generated subsystems with large dimension. In this section, we observe that a similar approach works for the non-uniformly contracting parabolic Cantor sets. Even though the main result of this section follows easily from known results, we are unaware of it appearing in the literature and so we briefly discuss the details.

Parabolic Cantor sets were introduced by Urba\'nski \cite{Urbanski1996} and can be thought of as attractors of IFSs acting on the line where maps are allowed to have indifferent (or parabolic) fixed points, where the derivative is 1. We briefly introduce the model. For a differentiable contractive map $f$ on $[0,1]$, we say that $p \in [0,1]$ is a \emph{parabolic point} if $f(p)=p$ and $|f'(p)| = 1$, where we use one sided derivatives at the end points if necessary. Clearly if $h$ is a differentiable contractive map, then it has at most one parabolic point.

 Let $\Lambda$ be a finite index set with at least two elements, and consider an IFS $(f_i)_{i \in \Lambda}$ of differentiable contractive maps acting on $[0,1]$. If at least one of the maps $(f_i)_{i \in \Lambda}$ has a parabolic point, we call the IFS a \emph{parabolic IFS}.  In addition to the above, we make the following assumptions:
\begin{enumerate}
\item For all $i \in \Lambda$,  $f_i$ is $C^{2}$ on $[0,1]$.  \label{it:ifs2}
\item For all $i \in \Lambda$,  $f_i$ has  non-vanishing derivative on  $[0,1]$.  \label{it:bilip2}
\item The IFS $(f_i)_{i\in\Lambda}$ satisfies the OSC with the open set $(0,1)$. \label{it:grid2}
\end{enumerate}
We call attractors of parabolic IFSs satisfying \cref{it:ifs2}, \cref{it:bilip2} and \cref{it:grid2} \emph{parabolic Cantor sets}. In particular, if the attractor is not the whole interval then it is a topological Cantor set. The following observation is the main result of this section.

%Let $\Sigma = \Sigma(\Lambda)$ the symbolic space over the alphabet $\Lambda$ and let $\Lambda^*$ denote the collection of all finite words. If $X$ denotes the attractor of the parabolic IFS, then the natural projection $\pi: \Sigma \to X$ as defined as earlier by
% \[
% \pi(\mtt{i}) = \lim_{n\to\infty} f_{\mtt{i}|_n}(0),
% \]
% is a well defined surjection.  \textcolor{red}{can we get rid of the natural projection, symbolic space etc, now that the attractor is defined from the start?}

\begin{proposition} \label{parabolic}
    Let $X$ be a parabolic Cantor set.  Then 
    \[
    \dimh \Bad_1 \cap X = \dimh X.
    \]
\end{proposition}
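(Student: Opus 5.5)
The upper bound $\dimh \Bad_1 \cap X \leq \dimh X$ is trivial, so I will prove the lower bound. The plan is to pass to finitely generated, uniformly contracting subsystems of high dimension, which avoids the parabolic points. For such a subsystem, \cref{schmidt} together with the remark that hyperplane diffusivity is automatic in $\R$ should give the bound. Concretely, it suffices to find, for every $\eps>0$, a compact set $Y\subseteq X$ with $\diml Y\geq \dimh X-\eps$. By the remark following \cref{schmidt}, either $Y$ is hyperplane diffuse, in which case \cref{schmidt} applies, or $\diml Y=0$; the latter case is harmless if $\dimh X-\eps\leq 0$ and impossible otherwise. Then $\dimh \Bad_1\cap X\geq \dimh \Bad_1\cap Y\geq \diml Y\geq \dimh X-\eps$.

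To construct $Y$, I would use the standard inducing (jump transformation) for parabolic IFSs, following Urba\'nski and Mauldin--Urba\'nski. Let $P\subseteq\Lambda$ be the indices with a parabolic point. One replaces the system by the infinite conformal IFS consisting of the maps $f_i$ for $i\notin P$ and $f_{i^m j}$ for $i\in P$, $m\geq 1$, $j\neq i$. Assumptions \cref{it:ifs2}--\cref{it:grid2} guarantee that this induced system is uniformly contracting with bounded distortion and satisfies the OSC. Its limit set differs from $X$ only by a countable set (preimages of the parabolic points), so it has the same Hausdorff dimension. The general theory of infinite conformal IFSs then gives
\[
\dimh X=\sup\{\dimh X_F : F \text{ a finite subsystem}\},
\]
so I can choose a finite subsystem $F$ with $\dimh X_F>\dimh X-\eps/2$. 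I expect this step to be the main obstacle, since it requires checking the hypotheses of the infinite-IFS theory (in particular bounded distortion for the induced maps near the parabolic point, which uses the $C^2$ assumption). However, this is precisely the content of Urba\'nski's work, and I would cite it rather than reprove it.

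Finally, $X_F$ is the attractor of a finite, uniformly contracting $C^{1+\alpha}$ conformal IFS on $[0,1]$ satisfying the OSC with open set $(0,1)$. Such a set is Ahlfors regular: by \cref{lemma:bounded-distortion} the natural self-conformal measure $\mu$ satisfies $\mu(B(x,r))\approx r^{s}$ with $s=\dimh X_F$. Ahlfors regularity implies that every weak tangent $T$ supports a limiting measure satisfying the same upper Frostman bound. This can be seen by the same weak-$*$ limit and Portmanteau argument used in the proof of \cref{thm:hausdorff-equals-ml}, giving $\dimh T\geq s$. Hence $\diml X_F=\dimh X_F>\dimh X-\eps/2$. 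Taking $Y=X_F$ and letting $\eps\to0$ completes the argument.
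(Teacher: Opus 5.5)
Your proof is correct and follows the same route as the paper: find a uniformly contracting finite subsystem $Y\subseteq X$ with $\dimh Y>\dimh X-\eps$, observe that $\diml Y=\dimh Y$, and apply \cref{schmidt}. The differences are only in how the two ingredients are justified. The paper cites \cite[Section 3]{FraserJurga2025} for a sub-IFS of a high iterate and \cite[Corollary 6.4.4]{Fraser21} (quasi-self-similarity) for $\diml Y=\dimh Y$, whereas you take finite subsystems of the Mauldin--Urba\'nski induced system, prove $\diml Y=\dimh Y$ via Ahlfors regularity and a weak-$*$ Frostman limit, and also make the $d=1$ hyperplane-diffusivity point explicit.
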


\begin{proof}
    The result follows by two simple observations.  First, given any $\eps>0$, there exists a subset $Y \subseteq X$ such that
    \[
    \dimh Y > \dimh X - \eps
    \]
    and such that $Y$ is a self-conformal subset of $X$ generated by a \emph{uniformly contracting} sub IFS of a (high iterate) of the parabolic IFS defining $X$.  This fact is essentially folklore, but was proved explicitly in, for example, \cite[Section 3]{FraserJurga2025}.  Next, since $Y$ is the attractor of a uniformly contracting $C^2$ IFS, it is quasi-self-similar and 
    \[
    \diml Y = \dimh Y.
    \]
    For this, see \cite[Corollary 6.4.4]{Fraser21}.  Then by \cref{schmidt} 
    \[
    \dimh X \geq \dimh \Bad_1 \cap X \geq \dimh \Bad_1 \cap Y \geq \diml Y = \dimh Y > \dimh X-\eps 
    \]
    and letting $\eps \to 0$ completes the proof. 
\end{proof}

In the above, one may dispense with the $C^2$ assumption by only assuming $C^{1+\alpha}$ along with another technical condition, see \cite[Equation (2.3)]{FraserJurga2025}, but we leave the precise formulation of this to the reader.

\bibliographystyle{abbrvurl}
\bibliography{bibliography}

\end{document}